\def\@email#1#2{%
 \endgroup
 \patchcmd{\titleblock@produce}
  {\frontmatter@RRAPformat}
  {\frontmatter@RRAPformat{\produce@RRAP{*#1\href{mailto:#2}{#2}}}\frontmatter@RRAPformat}
  {}{}
}%
\newtheorem{theorem}{Theorem}[section]
\newtheorem*{definition}{Definition}
\newtheorem{corollary}{Corollary}[theorem]
\newtheorem{proposition}{Proposition}[theorem]
\newtheorem{lemma}{Lemma}[theorem]
\begin{document}

\preprint{AIP/123-QED}

\title[A General Theory of Operator-Valued Measures]{A General Theory of Operator-Valued Measures}
\author{{Luis A.} {Cede\~no-P\'erez}$^1$ and Hernando Quevedo$^{1,2,3}$}

\email{luisacp@ciencias.unam.mx,quevedo@nucleares.unam.mx}

\affiliation{ Instituto de Ciencias Nucleares, Universidad Nacional Aut\'onoma de M\'exico, AP  70543, Mexico City, Mexico }
\affiliation{ Dipartimento di Fisica and Icra, Universit\`a di Roma “La Sapienza”, Roma, Italy }
\affiliation{ Al-Farabi Kazakh National University, Al-Farabi av. 71, 050040 Almaty, Kazakhstan}

\date{\today}
% It is always \today, today,
             %  but any date may be explicitly specified

\begin{abstract}
We construct a new kind of measures, called projection families, that generalize the classical notion of vector- and operator-valued measures. The maximal class of reasonable functions admits an integral with respect to a projection family, where the integral is defined as an element of the second dual instead of the original space. We show that projection families possess strong enough properties to satisfy the theorems of Monotone Convergence and Dominated Convergence, but are much easier to come by than the more restrictive operator-valued measures. 
\end{abstract}

\keywords{operator-valued measures, vector integration, vector measures, quantum information, spectral theorem in Banach spaces.\\
\\
{\bf MSC}: 28B05,28C20
}

\maketitle

\tableofcontents

\section{Introduction}

\subsection{Vector Measures}

Vector measures have played an important role in mathematics and physics since the twentieth century. They were first considered in the general version of the spectral theorem for operators between Hilbert spaces. This result provides a function $E$ that maps measurable sets of the spectrum into projections, with respect to which complex functions can be integrated to obtain an operator. The function $E$ is such that
\begin{equation*}
    T = \int_{\sigma(T)}\lambda\;dE(\lambda),
\end{equation*}
which additionally allows us to define the evaluation of integrable functions on the operator $T$ as the operator that results from integrating the function with respect to $E$, that is,
\begin{equation*}
    f(T) = \int_{\sigma(T)}f\;dE.
\end{equation*}
In this case, $E$ is known as a \textbf{spectral measure} or \textbf{resolution of the identity} associated to the operator $T$. The development of the theory of quantum information has also considered two similar types of operator measures (see \cite{Davies}). The first is the case of measures $\mathcal{P}$ that assign a positive operator $\mathcal{P}(A)$ to measurable sets $A$ and the second is the case of measures $\mathcal{E}$ that to each measurable set $A$ assign a bounded linear operator $\mathcal{E}(A)$ on the space of states of a Hilbert space $H$, also known as the space of density operators in $H$. In both cases, the measures define an integral
\begin{equation*}
    \int f\;d\mathcal{E}
\end{equation*}
which in turn is an operator of the same kind as the measure $\mathcal{E}$.

These three theories of integration are developed independently from each other, but their common characteristics make it natural to wonder if the three of them are particular cases of a more general theory of operator-valued measures. The theories of spectral measures and positive-operator-valued-measures (POVM) are similar since both measures have values in the space of bounded operators on a Hilbert space (see \cite{McLaren} for an approach in Hilbert spaces); however, measures that act over the space of states, known as operation-valued measures, are measures that act not on a Hilbert space but on a Banach space, which alters the theory considerably. This difference is such that the theory of integration with respect to operation-valued measures can not be a copy with minor differences of the same theory for spectral measures.

\subsection{Incomplete History of Vector Integration}

A first step towards a general theory for operator-valued measures is the similar problem studied in Non-linear Analysis of Integration in Banach spaces (\cite{Diestel}, \cite{Dinculeanu} and \cite{Graves}). The basic problems are to define the integral of a scalar-valued function $f$ with respect to a vector measure $\mu$ and of a vector function $F$ with respect to a scalar measure $\lambda$. The first solution to these problems consists in first defining the integral of a simple function as
\begin{equation*}
    \int f\;d\mu = \sum_{n=1}^{m}a_{n}\mu(E_{n})
\end{equation*}
and
\begin{equation*}
    \int F\;d\lambda = \sum_{n=1}^{m}x_{n}\lambda(E_{n}),
\end{equation*}
respectively. In the usual Lebesgue theory, one would proceed to take suprema over the integrals of functions of this kind. This is not possible in either case since the previous sums are vectors in a Banach space and not real numbers. For this reason, this process has to be replaced by a limit of integrals of simple functions to define the integral of more general functions. The resulting integrals are known as the Dunford-Schwartz and Bochner integrals, respectively. Even though these two integrals are properly defined, they are rather clumsy since the limit process complicates their calculation and the class of functions that can be integrated is rather small, since these are functions that can be adequately approximated by simple functions. The key step in the development of these integrals was to abandon the idea of approximation by simple functions in favor of a definition that depends on the behavior of the integral with respect to elements of the dual space. Given an element of the dual space $\Lambda$, it was proven that the integrals of Dunford-Schwarz and Bochner satisfy the relations
\begin{equation*}
    \Lambda\left(\int f\;d\mu\right) = \int f\;d\Lambda\circ\mu
\end{equation*}
and
\begin{equation*}
    \Lambda\left(\int F\;d\lambda\right) = \int \Lambda\circ F\;d\lambda,
\end{equation*}
respectively. These formulas uniquely determine the integral in Banach spaces, which implies that the integral can be defined as the only element of the space that satisfies them. These integrals are known as the Lewis and Pettis integrals, respectively (see \cite{Lewis} and \cite{Pettis}). This way of defining the integral has the additional advantage of being easily generalized to topological vector spaces whose dual separates points. The main difficulty of these integrals is the question of existence, as their computation is rather straightforward when compared to the Dunford-Schwarz and Bochner integrals. The difficulty of existence lies in the fact that the definition does not provide a way to approximate the integral. In the Banach space setting, this is solved by allowing the integral to be an element of the second dual space $X^{\ast\ast}$ instead of the original space $X$. Since $X\subset X^{\ast\ast}$, this solution consists in allowing the integral to exist in a space larger than the one usually considered. Furthermore, in the Banach space setting, it was proved that every reasonable function, in an adequate sense, admits an integral in $X^{\ast\ast}$. These are known as the generalized Lewis integral and the Dunford integral, respectively.

There is another integral that allows the integration of vector functions $F$ with respect to vector measures $\mu$, known as Bartle's bilinear integral (see \cite{Bartle}). This integral additionally requires a bounded bilinear form $T\colon X\times X \to X$, usually denoted as a product, such that the integral of a simple function is given by
\begin{equation*}
    \int F\;d\mu = \sum_{n=1}^{m}T(x_{m},\mu(E_{m})).
\end{equation*}
Bartle then defined the integral of more general functions by means of a limit of integrals of simple functions. Despite the generality and good properties of Bartle's integral, this integral has the same limitations as the Dunford-Schwartz and Bochner, which originate from defining the integral as a limit. The solution to this would be to find an expression for the Bartle integral that only makes reference to the dual spaces of the spaces involved. To our knowledge, this has not been achieved.

Most of the results of vector integration have fallen into obscurity because of their complexity and lack of literature. The existence of the Lewis integral seems to be unknown to researchers in mathematics.

\subsection{Projection Families}

The case of operator-valued measures seems suitable for applying the Lewis theory of vector measures, considered as measures on the Banach space $B(X)$ of linear bounded operators in $X$. In Lewis' theory, the dual space plays a fundamental role, which is problematic since the dual space of $B(X)$ is rarely known. To confront this problem, the theory we develop applies a pointwise procedure that depends on the dual of $X$ instead of that of $B(X)$. Furthermore, we prescind the idea of the measure assigning an operator to every measurable set in favor of a family of measures that defines the projections of an operator. If to every pair $x\in X$ and $\Lambda\in X^{\ast}$ we assign a measure $\mu_{\Lambda,x}$, to be thought as a function of the form $A\longmapsto \mu_{\Lambda,x}(A)$, then there may exist an operator $\int f\;d\mu$ such that
\begin{equation*}
    \Lambda\left(\int f\;d\mu(x)\right) = \int f\;d\mu_{\Lambda,x}.
\end{equation*}
If the family of measures $\mu_{\Lambda,x}$ varies continuously with respect to $\Lambda$, in an adequate sense, we expect to be able to reconstruct the operator $\int f\;d\mu(x)$ from the given projections. Likewise, if the family of measures continuously changes in $x$, in an adequate sense, we expect an operator that to each $x\in X$ assigns the element $\int f\;d\mu(x)$ to define a linear bounded operator. In this work, we prove that all previous assertions can be made true, thus generalizing the integration theory of Lewis and verifying that the previous theories of integration with respect to operator-valued measures are indeed particular cases of this theory. As a final application, we prove a generalization of the spectral theorem for operators between Banach spaces. (This construction is far too involved and will be presented in a follow-up article).

\subsection{Notation}

Throughout this work $X$ will be a Banach space, $X^{\ast}$ its dual space, $X^{\ast\ast}$ its second dual space and $J$ will be the canonical injection of $X$ into $X^{\ast\ast}$ that maps $x$ to the element $J(x)$ defined as
\begin{equation*}
    J(x)(\Lambda) = \Lambda(x).
\end{equation*}
$B_{X}$ and $B_{X^{\ast}}$ will denote the closed unit balls centered in zero of $X$ and $X^{\ast}$, respectively.

The topologies $\tau_{\omega}$, $\tau_{\omega^{\ast}}$ and $\tau_{b\omega^{\ast}}$ will denote the weak, weak-$\ast$ and bounded weak-$\ast$ topologies, respectively. The convergence with respect to each of these topologies will be denoted by the corresponding subindex, for example, $x_{i}\xrightarrow[]{\omega}x$, $\Lambda_{i} \xrightarrow[]{\omega^{\ast}} \Lambda$ and $\Lambda_{i} \xrightarrow[]{b\omega^{\ast}} \Lambda$, respectively. We recall that $J$ is a homeomorphism into its image from $\tau_{\omega}$ into $\tau_{\omega^{\ast}}$, $X$ is reflexive if $J$ is surjective and a functional from $X^{\ast}$ the field is continuous with respect to $\tau_{\omega^{\ast}}$ if and only if it is continuous with respect to $\tau_{b\omega^{\ast}}$ and both are equivalent to being in the image of $J$.

The pair $(\Omega,\Sigma)$ will denote a measurable space. We will say that a net of measures $(\mu_{i})_{i\in I}$ converges setwise if $\mu_{i}(E) \to \mu(E)$ for every $E\in\Sigma$, which we will denote by $\mu_{i}\xrightarrow[]{set} \mu$. Convergence almost everywhere will be denoted as $f_{i} \xrightarrow[]{a.e.} f$ and pointwise convergence as $f_{i}\xrightarrow[]{pw} f$. Similarly, the convergence in $L^{p}(\mu)$ will be denoted as $f_{i} \xrightarrow[]{L^{p}} f$. If $\mu$ is a complex measure, $|\mu|$ will denote its total variation measure and $|\mu|_{TV}$ its total variation norm. $\mathcal{M}(\Omega)$ will denote the set of complex measures in $(\Omega,\Sigma)$.

Even if $X$ is a Banach, and perhaps not Hilbert, space, we will denote by $\tau_{SO}$ the strong operator topology of $B(X)$, that is, the topology generated by the seminorms
\begin{equation*}
    p_{x}(T) = |T(x)|
\end{equation*}
associated to each $x\in X$.

\section{Vector Measures}

\subsection{Vector Projection Families}

We begin with the case of measures that take values in a Banach space $X$.

\begin{definition}
Let $X$ be a Banach space, and $(\Omega,\Sigma)$ a measurable space. A \textbf{vector projection family} is a collection of measures in $\Omega$, denoted by
\begin{equation*}
    \mu = \{\mu_{\Lambda}\;|\;\Lambda\in X^{\ast}\},
\end{equation*}
with the following properties.
\begin{enumerate}
    \item The function
    \begin{equation*}
    \begin{array}{ccc}
        X^{\ast} & \longrightarrow &\mathcal{M}(\Omega) \\
        \Lambda & \longmapsto & \mu_{\Lambda}
    \end{array}
    \end{equation*}
    defines a linear functional.
    \item If $(\Lambda_{i})_{i\in I}$ is a net in $X^{\ast}$ such that $\Lambda_{i} \to \Lambda$ then
    \begin{equation*}
        \mu_{\Lambda_{i}} \xrightarrow[]{set} \mu_{\Lambda}.
    \end{equation*}
\end{enumerate}
\end{definition}

\begin{definition}
Let $X$ be a Banach space, $(\Omega,\Sigma)$ a measurable space, and $\mu$ a vector projection family.
\begin{enumerate}
    \item We will say that $E\in\Sigma$ is a \textbf{null set} if $\mu_{\Lambda}(E) = 0$ for each $\Lambda\in X^{\ast}$. In this case, we will write $E\in \mathcal{N}(\mu)$.

    \item We will say that a measurable $f\colon \Omega \to \mathbb{C}$ is \textbf{essentially bounded} with respect to $\mu$ if there exists a null set $E$ such that $f$ is bounded in $\Omega\setminus E$. In this case, we will write $f\in L^{\infty}(\mu)$.

    \item We will say that a function $f\colon \Omega \to \mathbb{C}$ is \textbf{integrable} if $f\in L^{1}(\mu_{\Lambda})$ for each $\Lambda\in X^{\ast}$. In this case, we will write $f\in L^{1}(\mu)$.
\end{enumerate}
\end{definition}

Note that since each $\mu_{\Lambda}$ is finite, we also have that $L^{\infty}(\mu) \subset L^{1}(\mu)$.

\begin{definition}
Given a vector projection family $\mu$ and $f\in L^{1}(\mu)$ we define the \textbf{integral} of $f$ \textbf{with respect to} $\mu$ as the aplication
\begin{equation*}
\begin{array}{cccc}
    \int_{\Omega}f\;d\mu\colon& X^{\ast} & \longrightarrow &\mathbb{C} \\
    & \Lambda & \longmapsto & \int_{\Omega}f\;d\mu_{\Lambda}
\end{array}.
\end{equation*}
\end{definition}

\begin{proposition}
Let $\mu$ be a vector projection family. If $f\in L^{1}(\mu)$, then $\int f\;d\mu\in X^{\ast\ast}$.
\end{proposition}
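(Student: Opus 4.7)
The integral is by definition the map $\Phi_f\colon X^{\ast}\to\mathbb{C}$, $\Phi_f(\Lambda)=\int_\Omega f\,d\mu_\Lambda$. Linearity of $\Phi_f$ is immediate from condition (1) of the definition of a vector projection family together with linearity of the Lebesgue integral in the measure, so the whole task reduces to proving that $\Phi_f$ is bounded. My plan is: (i) deduce a linear-in-$\|\Lambda\|$ bound on $|\mu_\Lambda|_{TV}$; (ii) dispose of the essentially bounded case directly; (iii) reduce the general case to (ii) by truncation together with a second application of uniform boundedness.

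For (i), define $\psi_E\colon X^{\ast}\to\mathbb{C}$, $\psi_E(\Lambda)=\mu_\Lambda(E)$, for each $E\in\Sigma$. Condition (1) makes $\psi_E$ linear and condition (2) specialized to sequences makes it norm-to-norm continuous, so $\psi_E\in X^{\ast\ast}$. For each fixed $\Lambda$, $\sup_{E\in\Sigma}|\psi_E(\Lambda)|\leq|\mu_\Lambda|_{TV}<\infty$ because $\mu_\Lambda$ is a complex measure, so the uniform boundedness principle applied to the family $\{\psi_E\}_{E\in\Sigma}\subset X^{\ast\ast}$ yields $M:=\sup_E\|\psi_E\|<\infty$, i.e.\ $|\mu_\Lambda(E)|\leq M\|\Lambda\|$ for every $E$ and $\Lambda$. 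Passing to the total variation through the Jordan decompositions of $\operatorname{Re}\mu_\Lambda$ and $\operatorname{Im}\mu_\Lambda$ gives $|\mu_\Lambda|_{TV}\leq 4M\|\Lambda\|$. (The same bound is obtainable from the closed graph theorem applied to $\Lambda\mapsto\mu_\Lambda$ regarded as a map $X^{\ast}\to(\mathcal{M}(\Omega),|\cdot|_{TV})$, with uniqueness of setwise limits closing the graph.) Step (ii) is then immediate: if $f\in L^\infty(\mu)$, then $|\Phi_f(\Lambda)|\leq\|f\|_\infty|\mu_\Lambda|_{TV}\leq 4M\|f\|_\infty\|\Lambda\|$, so $\Phi_f\in X^{\ast\ast}$.

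For (iii), truncate by setting $f_n:=f\cdot\mathbf{1}_{\{|f|\leq n\}}\in L^\infty(\mu)$, so $\Phi_{f_n}\in X^{\ast\ast}$ by (ii). Fixing $\Lambda$ and applying the dominated convergence theorem to the complex measure $\mu_\Lambda$ with dominator $|f|\in L^1(|\mu_\Lambda|)$ gives $\Phi_{f_n}(\Lambda)\to\Phi_f(\Lambda)$; in particular $\{\Phi_{f_n}(\Lambda)\}_n$ is bounded for every $\Lambda\in X^{\ast}$, and a second invocation of the uniform boundedness principle yields $\sup_n\|\Phi_{f_n}\|<\infty$. A pointwise limit of a uniformly bounded family of continuous linear functionals is itself a continuous linear functional, so $\Phi_f=\int f\,d\mu\in X^{\ast\ast}$. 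The main obstacle is precisely this last step: integrability of $f$ against every $\mu_\Lambda$ carries no a priori uniform information in $\Lambda$, and in the absence of a single dominating measure on $\Omega$ there is no H\"older-type estimate available, so the truncation together with a second use of uniform boundedness is what converts the pointwise integrability hypothesis into the required uniform bound on $\|\Phi_f\|$.
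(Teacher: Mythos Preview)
Your proof is correct and follows the same two-step scheme as the paper: settle the $L^\infty(\mu)$ case first, then reach $L^1(\mu)$ by approximation together with a second use of the Uniform Boundedness Principle (Banach--Steinhaus). The difference is in how the $L^\infty$ step is executed. The paper argues sequential continuity directly: if $\Lambda_n\to\Lambda$ in norm then $\mu_{\Lambda_n}\xrightarrow{set}\mu_\Lambda$, and for bounded $f$ this is declared sufficient for $\int f\,d\mu_{\Lambda_n}\to\int f\,d\mu_\Lambda$ (implicitly relying on a Nikodym-type fact about setwise-convergent sequences of measures). You instead front-load the semivariation bound $|\mu_\Lambda|_{TV}\leq 4M\|\Lambda\|$---which the paper establishes only \emph{after} this proposition, as a separate corollary---and thereby obtain an explicit estimate $\|\Phi_f\|\leq 4M\|f\|_\infty$. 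Your route is more self-contained and quantitative; the paper's is shorter on the page but leans on an unstated lemma. For the passage from $L^\infty$ to $L^1$, the paper approximates nonnegative $f$ monotonically by simple functions while you truncate and invoke dominated convergence; both feed the same Banach--Steinhaus conclusion, and your version has the minor advantage of handling complex-valued $f$ without a separate decomposition into positive and negative parts.
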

\begin{proof}
We first assume that $f\in L^{\infty}(\mu)$. The lineality of $\int f\;d\mu$ follows from the lineality of $\Lambda \to \mu_{\Lambda}$, because of which we only need to establish continuity. If $(\Lambda_{n})_{n\in\mathbb{N}}$ is a sequence in $X^{\ast}$ such that $\Lambda_{n} \to \Lambda$ then $\mu_{\Lambda_{n}} \xrightarrow[]{set} \mu_{\Lambda}$. Since $f\in L^{\infty}(\mu)$ this mode of convergence is enough to guarantee that
\begin{equation*}
    \int f\;d\mu_{\Lambda_{n}} \to \int f\;d\mu_{\Lambda}.
\end{equation*}
If $f\in L^{1}(\mu)$ and $f\geq 0$ then there exists a sequence of simple functions $(s_{n})_{n\in\mathbb{N}}$ such that $s_{n} \leq s_{n+1}$ and $s_{n} \xrightarrow[]{pw} f$. This implies that
\begin{equation*}
    \int s_{n}\;d\mu_{\Lambda} \to \int f\;d\mu_{\Lambda}
\end{equation*}
for each $\Lambda\in X^{\ast}$, that is,
\begin{equation*}
    \int s_{n}\;d\mu \xrightarrow[]{pw} \int f\;d\mu.
\end{equation*}
Since each $\int s_{n}\;d\mu$ defines an element of $X^{\ast\ast}$ the Uniform Boundedness Principle implies that $\int f\;d\mu\in X^{\ast\ast}$.
\end{proof}

\begin{corollary}[Monotone Convergence Theorem]
Let $(f_{n})_{n\in\mathbb{N}}$ be a non-decreasing sequence of non-negative functions on $L^{1}(\mu)$ such that $f_{n} \xrightarrow[]{pw} f$. If $f\in L^{1}(\mu)$ then
\begin{equation*}
    \int f_{n}\;d\mu \xrightarrow[]{\omega^{\ast}} \int f\;d\mu.
\end{equation*}
\end{corollary}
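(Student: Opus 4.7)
The plan is to unwind the meaning of $\omega^{\ast}$-convergence in $X^{\ast\ast}$ and reduce the statement to the classical (scalar) Monotone Convergence Theorem applied pointwise to each of the complex measures $\mu_{\Lambda}$.

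First I would observe that, by definition of $\tau_{\omega^{\ast}}$ on $X^{\ast\ast}$, the desired conclusion is equivalent to the assertion that for every $\Lambda\in X^{\ast}$ one has
\begin{equation*}
    \left(\int f_{n}\;d\mu\right)(\Lambda) \longrightarrow \left(\int f\;d\mu\right)(\Lambda),
\end{equation*}
which, by the definition of the integral against a vector projection family, simply reads
\begin{equation*}
    \int f_{n}\;d\mu_{\Lambda} \longrightarrow \int f\;d\mu_{\Lambda}.
\end{equation*}
Note that the preceding proposition has already placed both $\int f_{n}\;d\mu$ and $\int f\;d\mu$ inside $X^{\ast\ast}$, so these functionals are well defined and no further uniform boundedness argument is required at this stage.

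Next I would fix $\Lambda\in X^{\ast}$ and work with the single complex measure $\mu_{\Lambda}\in\mathcal{M}(\Omega)$. Since this measure is not generally positive, I would pass to its Jordan decomposition
\begin{equation*}
    \mu_{\Lambda} = (\mu_{1}-\mu_{2}) + i(\mu_{3}-\mu_{4}),
\end{equation*}
where each $\mu_{j}$ is a finite positive measure dominated by $|\mu_{\Lambda}|$. Because $f\in L^{1}(\mu)\subseteq L^{1}(\mu_{\Lambda}) = L^{1}(|\mu_{\Lambda}|)$, the function $f$ is integrable against each $\mu_{j}$. I would then apply the classical scalar MCT to each positive measure $\mu_{j}$ and to the non-decreasing non-negative sequence $(f_{n})$, obtaining $\int f_{n}\;d\mu_{j} \longrightarrow \int f\;d\mu_{j}$ for $j=1,2,3,4$; forming the corresponding complex linear combination yields the required convergence against $\mu_{\Lambda}$.

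I do not expect a serious obstacle: the argument is essentially a bookkeeping translation of a statement about functionals on $X^{\ast}$ into four parallel invocations of the classical MCT. The only mildly delicate points are to remember that $\omega^{\ast}$-convergence in $X^{\ast\ast}$ is exactly pointwise convergence of functionals on $X^{\ast}$, and to accommodate the fact that each $\mu_{\Lambda}$ is complex rather than positive, which is precisely what forces the Jordan decomposition step rather than a direct appeal to MCT.
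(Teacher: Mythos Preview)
Your proposal is correct and follows essentially the same route as the paper: fix $\Lambda\in X^{\ast}$, reduce $\omega^{\ast}$-convergence to the scalar statement $\int f_{n}\,d\mu_{\Lambda}\to\int f\,d\mu_{\Lambda}$, and invoke the classical Monotone Convergence Theorem. The only difference is that the paper simply writes ``apply the usual form of the Monotone Convergence Theorem to $\mu_{\Lambda}$'' without further comment, whereas you explicitly carry out the Jordan decomposition of the complex measure $\mu_{\Lambda}$ into four finite positive measures before applying MCT; this extra care is appropriate and makes the argument strictly more complete than the paper's version.
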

\begin{proof}
Given $\Lambda\in X^{\ast}$ we apply the usual form of the Monotone Convergence Theorem to $\mu_{\Lambda}$ to obtain that
\begin{equation*}
    \int f_{n}\;d\mu_{\Lambda} \to \int f\;d\mu_{\Lambda},
\end{equation*}
from where we conclude the result.
\end{proof}

\begin{corollary}[Dominated Convergence Theorem]
If $(f_{n})_{n\in\mathbb{N}}$ is a sequence in $L^{1}(\mu)$ such that $f_{n} \xrightarrow[]{point} f$ and there exists $g\in L^{1}(\mu)$ such that $|f_{n}| \leq g$ for each $n\in\mathbb{N}$ then $f\in L^{1}(\mu)$ and
\begin{equation*}
    \int f_{n}\;d\mu \xrightarrow[]{\omega^{\ast}} \int f\;d\mu.
\end{equation*}
\end{corollary}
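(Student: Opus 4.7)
The plan is to reduce everything to the scalar Dominated Convergence Theorem by fixing an arbitrary $\Lambda \in X^{\ast}$ and working with the complex measure $\mu_{\Lambda}$, in complete analogy with the proof of the Monotone Convergence Corollary that precedes the statement. Weak-$\ast$ convergence in $X^{\ast\ast}$ is, by definition, pointwise convergence of the functionals on $X^{\ast}$, so the whole question factors through the individual measures $\mu_{\Lambda}$.

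First I would establish that $f \in L^{1}(\mu)$. By hypothesis $|f_{n}| \leq g$ pointwise, and $f_{n} \to f$ pointwise, so $|f| \leq g$. Fix $\Lambda \in X^{\ast}$; then $\mu_{\Lambda}$ is a complex measure with finite total variation $|\mu_{\Lambda}|$, and $g \in L^{1}(\mu_{\Lambda})$ means $g \in L^{1}(|\mu_{\Lambda}|)$. The bound $|f| \leq g$ then gives $f \in L^{1}(|\mu_{\Lambda}|) = L^{1}(\mu_{\Lambda})$. Since $\Lambda$ was arbitrary, $f \in L^{1}(\mu)$, and by the preceding Proposition $\int f \, d\mu$ is a well-defined element of $X^{\ast\ast}$.

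Next, for each fixed $\Lambda \in X^{\ast}$ I would apply the classical Dominated Convergence Theorem to $\mu_{\Lambda}$ (first decomposing into its real and imaginary parts and then their Jordan decompositions, or working directly with $|\mu_{\Lambda}|$), with dominating function $g$, to conclude
\begin{equation*}
    \int f_{n}\,d\mu_{\Lambda} \longrightarrow \int f\,d\mu_{\Lambda}.
\end{equation*}
Unpacking the definition of the integral, this is exactly
\begin{equation*}
    \left(\int f_{n}\,d\mu\right)(\Lambda) \longrightarrow \left(\int f\,d\mu\right)(\Lambda),
\end{equation*}
which, since $\Lambda$ was arbitrary, is the weak-$\ast$ convergence $\int f_{n}\,d\mu \xrightarrow[]{\omega^{\ast}} \int f\,d\mu$ in $X^{\ast\ast}$.

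I do not anticipate any real obstacle: the only mild point is noticing that the domination $|f_{n}| \leq g$ transfers to $|f| \leq g$ through the pointwise limit and thereby secures $f \in L^{1}(\mu)$ before one can even write down the integral on the right-hand side. (There is a small typo to silently accommodate: the hypothesis ``$f_{n} \xrightarrow{point} f$'' should be read as pointwise convergence, i.e.\ $f_{n} \xrightarrow{pw} f$, matching the notation fixed in the Introduction.) Once integrability is in hand, the argument is a one-line reduction to scalar Lebesgue theory exactly as in the Monotone Convergence Corollary.
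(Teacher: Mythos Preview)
Your proposal is correct and follows essentially the same route as the paper: fix $\Lambda\in X^{\ast}$, apply the classical Dominated Convergence Theorem to the complex measure $\mu_{\Lambda}$, and read off weak-$\ast$ convergence. If anything you are slightly more careful than the paper, which omits the explicit $|f|\leq g$ step and contains a typo (writing $\int f\,d\mu_{\Lambda_{n}}$ where $\int f_{n}\,d\mu_{\Lambda}$ is meant).
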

\begin{proof}
Let $\Lambda\in X^{\ast}$. By the usual Dominated Convergence Theorem in $L^{1}(\mu_{\Lambda})$ we have that $f\in L^{1}(\mu_{\Lambda})$ and
\begin{equation*}
    \int f\;d\mu_{\Lambda_{n}} \to \int f\;d\mu_{\Lambda},
\end{equation*}
that is,
\begin{equation*}
    \left(\int f_{n}\;d\mu\right)(\Lambda) \to \left(\int f\;d\mu\right)(\Lambda).
\end{equation*}
Since $\Lambda$ is arbitrary we conclude that $\int f_{n}\;d\mu \xrightarrow[]{\omega^{\ast}} \int f\;d\mu$.
\end{proof}

\begin{definition}
Let $X$ be a Banach space, $(\Omega,\Sigma)$ a measurable space, and $\mu$ a vector projection family. We will say that $f\in L^{\infty}(\mu)$ is \textbf{properly integrable} if
\begin{equation*}
    \int_{\Omega} f\;d\mu \in J(X).
\end{equation*}
\end{definition}

There is an immediate corollary to the definition.

\begin{corollary}
Let $X$ be a Banach space, $(\Omega,\Sigma)$ a measurable space, and $\mu$ a vector projection family. If $X$ is reflexive then each element of $L^{\infty}(\mu)$ is properly integrable.
\end{corollary}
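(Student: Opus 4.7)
The proof is almost immediate from what has already been established, so the plan is short. First I would invoke the preceding Proposition: since the paper notes that $L^{\infty}(\mu) \subset L^{1}(\mu)$ (because each $\mu_{\Lambda}$ is finite), any $f \in L^{\infty}(\mu)$ in particular lies in $L^{1}(\mu)$, and hence the Proposition guarantees that $\int_{\Omega} f\;d\mu$ defines an element of $X^{\ast\ast}$.

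Next I would invoke reflexivity. The notation section recalls that $X$ is reflexive precisely when the canonical injection $J\colon X \to X^{\ast\ast}$ is surjective, which is equivalent to $J(X) = X^{\ast\ast}$. Combining these two facts gives $\int_{\Omega} f\;d\mu \in X^{\ast\ast} = J(X)$, which is the definition of proper integrability.

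There is no real obstacle here; the corollary is essentially a one-line consequence of the Proposition together with the definition of reflexivity. The only thing worth checking is the containment $L^{\infty}(\mu) \subset L^{1}(\mu)$, which is already asserted in the remark following the definition of integrability and follows from the finiteness of each scalar measure $\mu_{\Lambda}$ (so that bounded measurable functions are automatically $\mu_{\Lambda}$-integrable for every $\Lambda$).
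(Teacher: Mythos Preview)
Your proposal is correct and matches the paper's reasoning exactly. In fact, the paper does not even spell out a proof: it simply remarks that this is ``an immediate corollary to the definition,'' which is precisely the one-line argument you give (the Proposition places $\int_{\Omega} f\,d\mu$ in $X^{\ast\ast}$, and reflexivity gives $X^{\ast\ast} = J(X)$).
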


\begin{definition}
Let $\mu$ be a vector projection family. Given $A\in\Sigma$ we define the \textbf{semivariation} of $\mu$ in $A$ as
\begin{equation*}
    |\mu|_{SV}(A) = \sup_{\Lambda\in B_{X^{\ast}}} |\mu_{\Lambda}|_{TV}(A).
\end{equation*}
We define the \textbf{semivariation} of $\mu$ as $|\mu|_{SV} = |\mu|_{SV}(\Omega)$.
\end{definition}

We now show that the semivariation of a vector projection family is finita. To this end, we introduce the application
\begin{equation*}
\begin{array}{cccc}
    T\colon& X^{\ast} & \longrightarrow & Med_{\mathbb{C}}(\Omega,\Sigma) \\
    & \Lambda & \longmapsto & \mu_{\Lambda}
\end{array},
\end{equation*}
where $Med_{\mathbb{C}}(\Omega,\Sigma)$ denotes the space of complex measures on $(\Omega,\Sigma)$ with the total variation norm. It is clear from the definition of vector projection family that $T$ is linear. We now show that it is also continuous.

\begin{proposition}
The function $T$ is continuous.
\end{proposition}
\begin{proof}
We appeal to the Closed Graph Theorem. Assume that $\Lambda_{n} \to \Lambda$ and $\mu_{\Lambda_{n}} \xrightarrow[]{TV} \nu$, for some $\nu\in Med_{\mathbb{C}}(\Omega,\Sigma)$. We must show that $\nu = \mu_{\Lambda}$. Since convergence in total variation implies setwise convergence we have that $\mu_{\Lambda_{n}} \xrightarrow[]{set} \nu$. Since $\mu$ is a vector projection family, $\Lambda_{n} \to x$ implies that $\mu_{\Lambda_{n}} \xrightarrow[]{set} \mu_{\Lambda}$. This implies that $\mu_{\Lambda} = \nu$ and establishes the result.
\end{proof}

This result shows that a vector projection family can also be thought of as an operator from $X^{\ast}$ to $Med_{\mathbb{C}}(\Omega,\Sigma)$. More than a simple curiosity, this result implies that the semivariation of a vector projection family is finite.

\begin{corollary}
If $\mu$ is a vector projection family then $\mu$ has finite semivariation.
\end{corollary}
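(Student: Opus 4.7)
The plan is to reduce the statement to the preceding proposition applied to the family $M = \{\mu_{\Lambda} \mid \Lambda \in B_{X^{\ast}}\}$, where $B_{X^{\ast}}$ denotes the closed unit ball of $X^{\ast}$ (implicit in the definition of semivariation, since otherwise homogeneity in $\Lambda$ would make the supremum trivially infinite). To invoke the proposition we must verify that for every $A \in \Sigma$ the set of scalars $\{\mu_{\Lambda}(A) \mid \Lambda \in B_{X^{\ast}}\}$ is bounded.

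First I would fix $A \in \Sigma$ and consider the evaluation map $\phi_{A}\colon X^{\ast} \to \mathbb{C}$ defined by $\phi_{A}(\Lambda) = \mu_{\Lambda}(A)$. Property (1) in the definition of a vector projection family immediately gives that $\phi_{A}$ is linear, since the assignment $\Lambda \mapsto \mu_{\Lambda}$ is linear into $\mathcal{M}(\Omega)$ and evaluation at $A$ is a linear operation on $\mathcal{M}(\Omega)$. Property (2) in turn yields continuity of $\phi_{A}$: whenever $\Lambda_{n} \to \Lambda$ in $X^{\ast}$, setwise convergence $\mu_{\Lambda_{n}} \xrightarrow{set} \mu_{\Lambda}$ implies in particular $\phi_{A}(\Lambda_{n}) \to \phi_{A}(\Lambda)$. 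A linear functional on a Banach space that is sequentially continuous at zero is bounded, so $\sup_{\Lambda \in B_{X^{\ast}}} |\mu_{\Lambda}(A)| = \|\phi_{A}\| < \infty$.

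Having verified the hypothesis of the preceding proposition for the collection $M = \{\mu_{\Lambda} \mid \Lambda \in B_{X^{\ast}}\}$, that proposition delivers the desired uniform bound
\begin{equation*}
    |\mu|_{SV} = \sup_{\Lambda \in B_{X^{\ast}}} |\mu_{\Lambda}|_{TV}(\Omega) < \infty,
\end{equation*}
which is exactly the statement that $\mu$ has finite semivariation.

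The only real subtlety, and the place where the argument could go wrong, is the passage from the pointwise-in-$A$ bound to a uniform bound in total variation; but this is precisely the content of the preceding proposition, which itself rests on two applications of the Uniform Boundedness Principle. So the work here is just the verification that the vector projection family axioms make each coordinate functional $\phi_{A}$ continuous, after which the earlier proposition does all the heavy lifting.
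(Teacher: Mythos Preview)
Your proof is correct and follows essentially the same approach as the paper: you verify that each evaluation functional $\phi_{A}(\Lambda)=\mu_{\Lambda}(A)$ is bounded on $X^{\ast}$ (hence bounded on $B_{X^{\ast}}$) and then invoke the preceding proposition on the family $M=\{\mu_{\Lambda}\mid \Lambda\in B_{X^{\ast}}\}$. Your write-up is in fact more explicit than the paper's, which simply asserts boundedness of $\Lambda\mapsto\mu_{\Lambda}(A)$ without spelling out the appeal to properties (1) and (2); you also correctly flag that the supremum defining the semivariation must be taken over the unit ball.
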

\begin{proof}
The previous lemma shows that $T$ is bounded, hence its norm is finite. To conclude we simply note that
\begin{align*}
    |T| &= \sup_{\Lambda\in B_{X^{\ast}}}|T(\Lambda)|_{TV}\\
    &= \sup_{\Lambda\in B_{X^{\ast}}}|\mu_{\Lambda}|_{TV}\\
    &= |\mu|_{SV}.
\end{align*}
\end{proof}

\subsection{Vector Measures}

\begin{definition}
Let $(\Omega,\Sigma)$ be a measurable space and $X$ a Banach space. A \textbf{vector measure} is a function $\mu\colon \Sigma \to X$ such that
\begin{enumerate}
    \item $\mu(\emptyset) = 0$
    \item $\mu\left(\biguplus_{n\in\mathbb{N}} E_{n}\right) = 
    \sum_{n\in\mathbb{N}}\mu(E_{n})$
\end{enumerate}
\end{definition}

Given a vector measure $\mu$ and $\Lambda\in X^{\ast}$ we can define $\mu_{\Lambda}\colon \Sigma \to \mathbb{C}$ as
\begin{equation*}
    \mu_{\Lambda}(E) = \Lambda(\mu(E)).
\end{equation*}
Furthermore, the application $\Lambda \longmapsto \mu_{\Lambda}$ is linear.

\begin{proposition}
If $\mu$ is a vector measure then the family of measures
\begin{equation*}
    \{\mu_{\Lambda}\;|\;\Lambda\in X^{\ast}\}
\end{equation*}
is a vector projection family.
\end{proposition}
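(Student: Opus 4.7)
The plan is to verify the two defining properties of a vector projection family directly from the formula $\mu_{\Lambda} = \Lambda\circ\mu$, after first checking that each $\mu_{\Lambda}$ genuinely lies in $\mathcal{M}(\Omega)$.

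First I would confirm that each $\mu_{\Lambda}$ is a complex measure. The condition $\mu_{\Lambda}(\emptyset)=\Lambda(0)=0$ is immediate from linearity of $\Lambda$. Countable additivity is the main content: given a disjoint sequence $(E_{n})_{n\in\mathbb{N}}\subset\Sigma$, the definition of a vector measure yields $\mu(\biguplus_{n}E_{n})=\sum_{n}\mu(E_{n})$ with convergence in the norm of $X$, so applying the continuous functional $\Lambda$ term by term produces
\begin{equation*}
    \mu_{\Lambda}\Bigl(\biguplus_{n}E_{n}\Bigr)=\Lambda\Bigl(\sum_{n}\mu(E_{n})\Bigr)=\sum_{n}\Lambda(\mu(E_{n}))=\sum_{n}\mu_{\Lambda}(E_{n}).
\end{equation*}
Finiteness of the total variation of $\mu_{\Lambda}$ is then automatic from the classical fact that every countably additive complex-valued set function on a $\sigma$-algebra has bounded total variation, so $\mu_{\Lambda}\in\mathcal{M}(\Omega)$.

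Next, linearity of the map $\Lambda\mapsto\mu_{\Lambda}$ is straightforward: at each fixed $E\in\Sigma$, one has $(\alpha\Lambda_{1}+\beta\Lambda_{2})(\mu(E))=\alpha\Lambda_{1}(\mu(E))+\beta\Lambda_{2}(\mu(E))$, which is exactly the required identity setwise, and setwise equality of complex measures is equality in $\mathcal{M}(\Omega)$. Finally, for the setwise continuity property, suppose $\Lambda_{i}\to\Lambda$ in $X^{\ast}$. For each $E\in\Sigma$ the estimate
\begin{equation*}
    |\mu_{\Lambda_{i}}(E)-\mu_{\Lambda}(E)|=|(\Lambda_{i}-\Lambda)(\mu(E))|\leq\|\Lambda_{i}-\Lambda\|\,\|\mu(E)\|
\end{equation*}
shows $\mu_{\Lambda_{i}}(E)\to\mu_{\Lambda}(E)$, i.e.\ $\mu_{\Lambda_{i}}\xrightarrow{set}\mu_{\Lambda}$.

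I do not anticipate any genuine obstacle here; each verification is essentially bookkeeping. The only place one appeals to nontrivial content is in asserting finite total variation of $\mu_{\Lambda}$, which is a standard theorem about scalar countably additive set functions and is safe to cite rather than reprove.
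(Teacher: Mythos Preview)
Your proof is correct and follows essentially the same approach as the paper: the paper notes linearity and the measure property of each $\mu_{\Lambda}$ before the statement and then proves only the setwise continuity via the pointwise convergence $\Lambda_{i}(\mu(E))\to\Lambda(\mu(E))$, which is exactly what your norm estimate delivers. Your write-up is just more explicit about the preliminary verifications.
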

\begin{proof}
We only need to prove continuity. If $(\Lambda_{i})_{i\in I}$ is a net in $X^{\ast}$ such that $\Lambda_{i} \to \Lambda$, in particular it converges pointwise, because of this given $E\in\Sigma$ it follows that
\begin{align*}
    \mu_{\Lambda_{i}}(E) &= \Lambda_{i}(\mu(E)(x))\\
    &\to \Lambda(\mu(E)(x))\\
    &= \mu_{\Lambda}(E)(x).
\end{align*}
\end{proof}

The integral with respect to the vector projection family generated by a vector measure coincides with the generalized Lewis integral as developed in \cite{Lewis}, because of which the integral with respect to a vector projection family generalizes the Lewis integral.

\begin{proposition}
A vector projection family $\mu$ is generated by a vector measure if and only if for each $E\in\Sigma$ the application $\Lambda \longmapsto \mu_{\Lambda}(E)$ is $\tau_{\omega^{\ast}}$ continuous.
\end{proposition}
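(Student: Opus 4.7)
The plan is to establish both implications by exploiting the characterization recalled in the Notation section: a linear functional on $X^{\ast}$ is $\tau_{\omega^{\ast}}$-continuous if and only if it lies in $J(X)$. The forward direction is then essentially immediate, and only the reverse requires substantive work. Assuming $\mu$ is generated by a vector measure $\nu$, one has $\mu_{\Lambda}(E) = \Lambda(\nu(E)) = J(\nu(E))(\Lambda)$, so $\Lambda\mapsto \mu_{\Lambda}(E)$ equals $J(\nu(E))$, which is $\tau_{\omega^{\ast}}$-continuous.

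For the converse, assume $\Lambda\mapsto \mu_{\Lambda}(E)$ is $\tau_{\omega^{\ast}}$-continuous for each $E\in\Sigma$. By property (1) of vector projection families it is also linear, so the characterization above furnishes a unique $x_{E}\in X$ with $\Lambda(x_{E}) = \mu_{\Lambda}(E)$ for every $\Lambda\in X^{\ast}$. Define $\nu\colon\Sigma\to X$ by $\nu(E) = x_{E}$. Since $\mu_{\Lambda}(\emptyset) = 0$ for every $\Lambda$, injectivity of $J$ gives $\nu(\emptyset) = 0$.

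The main obstacle I anticipate is countable additivity of $\nu$, as what is available at the scalar level is only the $\sigma$-additivity of each $\mu_{\Lambda}$. Given pairwise disjoint $(E_{n})_{n\in\mathbb{N}}$ and any subset $A\subset\mathbb{N}$, this yields
\begin{equation*}
    \sum_{n\in A}\Lambda(\nu(E_{n})) = \sum_{n\in A}\mu_{\Lambda}(E_{n}) = \mu_{\Lambda}\Bigl(\bigcup_{n\in A}E_{n}\Bigr) = \Lambda\Bigl(\nu\Bigl(\bigcup_{n\in A}E_{n}\Bigr)\Bigr)
\end{equation*}
for every $\Lambda\in X^{\ast}$. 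Hence every subseries of $\sum_{n}\nu(E_{n})$ converges weakly in $X$, so the Orlicz--Pettis theorem upgrades this to norm convergence, yielding $\nu\bigl(\bigcup_{n}E_{n}\bigr) = \sum_{n}\nu(E_{n})$ and completing the verification that $\nu$ is a vector measure generating $\mu$.
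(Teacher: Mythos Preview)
Your proof is correct and follows essentially the same approach as the paper: both directions use the identification of $\tau_{\omega^{\ast}}$-continuous functionals on $X^{\ast}$ with $J(X)$, and the reverse implication is completed via Orlicz--Pettis after observing that each $\mu_{\Lambda}$ witnesses weak (subseries) $\sigma$-additivity of the resulting $\nu$. Your version is slightly more explicit about the subseries hypothesis of Orlicz--Pettis, but the argument is the same.
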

\begin{proof}
The forward implication is immediate. For the reciprocal implication, our hypothesis implies that the function $\Lambda \longmapsto \mu_{\Lambda}(E)$ is $\tau_{\omega^{\ast}}$ continuous, which implies that there exists $\mu(E)\in X$ such that $\Lambda(\mu(E)) = \mu_{\Lambda}(E)$. It follows that $\Lambda(\mu(\emptyset)) = 0$ for each $\Lambda\in X^{\ast}$ and the Hahn-Banach theorem implies that $\mu(\emptyset) = 0$. On the other hand, if $(E_{n})_{n\in\mathbb{N}}$ is a sequence of disjoint sets in $\Sigma$ then
\begin{align*}
    \Lambda\left(\mu\left(\bigcup_{n\in\mathbb{N}}E_{n}\right)\right) &= \mu_{\Lambda}\left(\bigcup_{n\in\mathbb{N}}E_{n}\right)\\
    &= \sum_{n\in\mathbb{N}}\Lambda(\mu(E_{n})).
\end{align*}
By the Orlicz-Pettis theorem the function $E\longmapsto \mu(E)$ is $\sigma$-additive in the weak topology and therefore is $\sigma$-aditiva.
\end{proof}

To obtain more criteria for a vector projection family to be a vector measure we will use the following lemma.

\begin{lemma}
Let $(\mu_{i})_{i\in I}$ be a net bounded in total variation such that $\mu_{i} \xrightarrow[]{set} \mu$. If $f$ is bounded then
\begin{equation*}
    \int f\;d\mu_{i} \to \int f\;d\mu.
\end{equation*}
\end{lemma}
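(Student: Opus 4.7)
The plan is to reduce to simple functions by uniform approximation and then use a standard three-term estimate controlled by the total variation bound. Let $M = \sup_{i\in I} |\mu_{i}|_{TV}$, which is finite by hypothesis. Since $f$ is bounded and measurable, for each $\varepsilon > 0$ I can pick a simple function $s$ with $\|f - s\|_{\infty} < \varepsilon$. Writing
\begin{equation*}
    \int f\;d\mu_{i} - \int f\;d\mu = \int (f-s)\;d\mu_{i} + \int s\;d\mu_{i} - \int s\;d\mu + \int (s-f)\;d\mu,
\end{equation*}
the first and last terms are bounded in modulus by $\|f-s\|_{\infty}|\mu_{i}|_{TV}$ and $\|f-s\|_{\infty}|\mu|_{TV}$ respectively, while the middle difference $\int s\;d\mu_{i} - \int s\;d\mu$ reduces by linearity to a finite combination of terms of the form $\mu_{i}(E) - \mu(E)$, each of which tends to zero by setwise convergence.

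The first step in execution is therefore to establish the simple function case: if $s = \sum_{k=1}^{m} a_{k}\chi_{E_{k}}$ then $\int s\;d\mu_{i} = \sum_{k=1}^{m} a_{k}\mu_{i}(E_{k}) \to \sum_{k=1}^{m} a_{k}\mu(E_{k}) = \int s\;d\mu$, immediately from the hypothesis $\mu_{i} \xrightarrow{set} \mu$. Next, given $\varepsilon > 0$, choose $s$ with $\|f-s\|_{\infty} < \varepsilon/(2M + 2|\mu|_{TV} + 1)$, and then choose $i_{0}$ so that $|\int s\;d\mu_{i} - \int s\;d\mu| < \varepsilon/2$ for $i \geq i_{0}$; combining the three estimates yields $|\int f\;d\mu_{i} - \int f\;d\mu| < \varepsilon$ for $i \geq i_{0}$.

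The only real obstacle is verifying that $|\mu|_{TV} < \infty$, since without this the last term in the three-term split is not obviously small. Setwise convergence forces $\mu$ to be countably additive (Nikodym-Saks, or directly from $\mu_{i}\xrightarrow{set}\mu$ on a bounded net), and for any finite measurable partition $E_{1},\ldots,E_{n}$ of a set $A\in\Sigma$,
\begin{equation*}
    \sum_{k=1}^{n}|\mu(E_{k})| = \lim_{i}\sum_{k=1}^{n}|\mu_{i}(E_{k})| \leq \sup_{i}|\mu_{i}|_{TV}(A) \leq M,
\end{equation*}
so $|\mu|_{TV} \leq M$. Alternatively one can invoke the preceding proposition, applied to $M \cup \{\mu\}$ (which is still pointwise bounded by setwise convergence), to bypass this verification. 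With $|\mu|_{TV}$ controlled, the three-term argument closes the proof.
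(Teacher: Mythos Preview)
Your proof is correct and follows essentially the same route as the paper: uniform approximation by simple functions, a three-term estimate controlled by the total variation bound, and setwise convergence to handle the simple-function middle term. You additionally verify that $|\mu|_{TV}\leq M$, a point the paper uses without comment; this is a worthwhile observation but does not change the structure of the argument.
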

\begin{proof}
Since $(\mu_{i})_{i\in I}$ is bounded in total variation there exists $M\geq 0$ such that $|\mu_{i}|_{TV} \leq M$ for each $i\in I$. Since $f$ is bounded there exists a sequence of simple functions $(s_{n})_{n\in\mathbb{N}}$ such that $s_{n} \xrightarrow[]{unif} f$. Since each $s_{n}$ is simple and $\mu_{i} \xrightarrow[]{set} \mu$ we have that $\int s_{n}\;d\mu_{i} \xrightarrow[i\to\infty]{} \int s_{n}\;d\mu$ for each $n\in\mathbb{N}$. It follows that
\begin{align*}
    \left| \int f\;d\mu_{i} - \int f\;d\mu \right| &= \left| \int f\;d\mu_{i} - \int s_{n}\;d\mu_{i} \right| + \left| \int s_{n}\;d\mu_{i} - \int s_{n}\;d\mu \right| + \left| \int s_{n}\;d\mu - \int f\;d\mu \right|\\
    &\leq |f - s_{n}|_{\infty}|\mu_{i}|_{TV} + \left| \int s_{n}\;d\mu_{i} - \int s_{n}\;d\mu \right| + |s_{n} - f|_{\infty}|\mu|_{TV}\\
    &\leq |f - s_{n}|_{\infty}M + \left| \int s_{n}\;d\mu_{i} - \int s_{n}\;d\mu \right| + |s_{n} - f|_{\infty}|\mu|_{TV}
\end{align*}
for each $i\in I$ and $n\in\mathbb{N}$. Choose $n\in\mathbb{N}$ such that $|s_{n} - f|_{\infty} < \min\{\frac{\epsilon}{3M},\frac{\epsilon}{3|\mu|_{TV}}\}$ and for this $n$ there exists $i_{0}\in I$ such that if $i\succeq i_{0}$ then $\left| \int s_{n}\;d\mu_{i} - \int s_{n}\;d\mu \right| \leq \frac{\epsilon}{3}$, because of which if $i\succeq i_{0}$ then
\begin{align*}
    \left| \int f\;d\mu_{i} - \int f\;d\mu \right| < \epsilon.
\end{align*}
\end{proof}

\begin{proposition}
A vector projection family is generated by a vector measure if and only if each element of $f\in L^{\infty}(\mu)$ is properly integrable.
\end{proposition}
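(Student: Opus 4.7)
The proof has two directions, and the reverse is essentially immediate from the earlier proposition characterizing when a vector projection family comes from a vector measure, so the real work is in the forward direction.

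For the forward direction, assume $\mu$ is generated by a vector measure $m\colon\Sigma\to X$, i.e.\ $\mu_{\Lambda}(E)=\Lambda(m(E))$. I would first show the statement on simple functions: for $s=\sum_{n=1}^{k}a_{n}\chi_{E_{n}}$ with disjoint $E_{n}$,
\begin{equation*}
    \int s\,d\mu_{\Lambda}=\sum_{n=1}^{k}a_{n}\Lambda(m(E_{n}))=\Lambda\!\left(\sum_{n=1}^{k}a_{n}m(E_{n})\right),
\end{equation*}
so $\int s\,d\mu=J\bigl(\sum_{n}a_{n}m(E_{n})\bigr)\in J(X)$. Then, given an arbitrary $f\in L^{\infty}(\mu)$, choose simple functions $s_{n}$ with $s_{n}\xrightarrow{unif}f$ off of a null set. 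The estimate that makes the whole argument work is
\begin{equation*}
    \left\|\int s_{n}\,d\mu-\int s_{m}\,d\mu\right\|_{X^{\ast\ast}}=\sup_{\Lambda\in B_{X^{\ast}}}\left|\int(s_{n}-s_{m})\,d\mu_{\Lambda}\right|\leq\|s_{n}-s_{m}\|_{\infty}\,|\mu|_{SV},
\end{equation*}
using that $|\mu|_{SV}<\infty$ by the earlier corollary. Hence $\bigl(\int s_{n}\,d\mu\bigr)$ is Cauchy in norm in $X^{\ast\ast}$, and pointwise on $X^{\ast}$ it converges to $\int f\,d\mu$; therefore the convergence is also in norm. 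Since each $\int s_{n}\,d\mu\in J(X)$ and $J(X)$ is norm-closed in $X^{\ast\ast}$, we conclude $\int f\,d\mu\in J(X)$.

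For the reverse direction, suppose every $f\in L^{\infty}(\mu)$ is properly integrable. Applying this to the characteristic function $\chi_{E}$ for $E\in\Sigma$ yields an element $m(E)\in X$ such that $J(m(E))=\int\chi_{E}\,d\mu$, i.e.\ $\Lambda(m(E))=\mu_{\Lambda}(E)$ for every $\Lambda\in X^{\ast}$. In particular the functional $\Lambda\longmapsto\mu_{\Lambda}(E)$ lies in $J(X)\subset X^{\ast\ast}$ and is therefore $\tau_{\omega^{\ast}}$-continuous. The previous proposition then gives at once that $\mu$ is generated by the vector measure $E\longmapsto m(E)$.

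The main obstacle is the forward direction, and specifically pushing the fact that simple-function integrals lie in $J(X)$ through to arbitrary $f\in L^{\infty}(\mu)$. The only convergence we automatically have on integrals is $\omega^{\ast}$-convergence, and $J(X)$ is \emph{not} $\tau_{\omega^{\ast}}$-closed in $X^{\ast\ast}$ (this is precisely the failure of reflexivity). Finite semivariation is what upgrades the convergence to norm convergence in $X^{\ast\ast}$, and this is the single ingredient that turns the argument from a $\tau_{\omega^{\ast}}$-closure question (which would be false in general) into a norm-closure question in $J(X)$, which holds trivially.
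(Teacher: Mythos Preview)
Your proof is correct, and the reverse direction matches the paper's (both reduce to the earlier $\tau_{\omega^{\ast}}$-continuity characterization via characteristic functions). The forward direction, however, proceeds along a genuinely different line.

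The paper argues \emph{topologically}: given $f\in L^{\infty}(\mu)$, it shows directly that the functional $\Lambda\mapsto\int f\,d\mu_{\Lambda}$ is $\tau_{b\omega^{\ast}}$-continuous. For a bounded net $\Lambda_{i}\xrightarrow{\omega^{\ast}}\Lambda$ one has $\mu_{\Lambda_{i}}\xrightarrow{set}\mu_{\Lambda}$ (by the vector-measure hypothesis), and finite semivariation gives a uniform bound $|\mu_{\Lambda_{i}}|_{TV}\le M|\mu|_{SV}$; the preceding lemma on setwise-convergent, TV-bounded nets then yields $\int f\,d\mu_{\Lambda_{i}}\to\int f\,d\mu_{\Lambda}$. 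Continuity for $\tau_{b\omega^{\ast}}$ implies continuity for $\tau_{\omega^{\ast}}$, hence $\int f\,d\mu\in J(X)$.

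Your argument is instead a \emph{norm-closure} argument: integrals of simple functions lie in $J(X)$ by inspection, the estimate $\|\int s_{n}\,d\mu-\int f\,d\mu\|_{X^{\ast\ast}}\le\|s_{n}-f\|_{\infty}\,|\mu|_{SV}$ (finite semivariation again) forces norm convergence in $X^{\ast\ast}$, and $J(X)$ is norm-closed. This is shorter and avoids the setwise-convergence lemma entirely; in effect you are rediscovering the Dunford--Schwartz route in a situation where the paper deliberately stays on the Lewis/dual-space side. Both approaches hinge on exactly the same ingredient---finite semivariation---but use it differently: the paper uses it to bound a net of measures so the lemma applies, while you use it as a Lipschitz constant to upgrade uniform convergence of integrands to norm convergence of integrals.
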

\begin{proof}
The reverse implication is immediate. For the remaining implication assume that for each $E\in\Sigma$ the function $\Lambda \longmapsto \mu_{\Lambda}(E)$ is $\tau_{\omega^{\ast}}$ continuous, from this if $(\Lambda_{i})_{i\in I}$ is a bounded net in $X^{\ast}$ such that $\Lambda_{i} \xrightarrow[]{\omega^{\ast}} \Lambda$ then $\mu_{\Lambda_{i}} \xrightarrow[]{set} \mu_{\Lambda}$. Since $\mu$ has finite semivariation and $|\mu_{\Lambda_{i}}|_{TV} \leq |\Lambda_{i}||\mu|_{SV} \leq M|\mu|_{SV}$ we have that $(\mu_{\Lambda_{i}})_{i\in I}$ satisfies the hypotheses of the previous lemma. We conclude that $\int f\;d\mu_{\Lambda_{i}} \to \int f\;d\mu_{\Lambda}$. It follows that $\int f\;d\mu$ is continuous with respecto to $\tau_{b\omega^{\ast}}$, because of which it is also continuous with respect to $\tau_{\omega^{\ast}}$ and $\int f\;d\mu\in J(X)$.
\end{proof}

\subsection{Semivariation and Dominated Convergence}

We now prove that the Dominated Convergence Theorem remains valid for properly integrable functions. This will require some work and is based on further properties of the semivariation.

\begin{definition}
Let $\mu$ be a vector projection family. We say that the 
semivariation of $\mu$ is \textbf{continuous} if for every sequence $(E_{n})_{n\in\mathbb{N}}$ in $\Sigma$ such that $E_{n} \searrow \emptyset$ the equality
\begin{equation*}
    \lim_{n\to\infty}|\mu|_{SV}(E_{n}) = 0
\end{equation*}
is satisfied.
\end{definition}

This definition is reminiscent of the continuity of measure and can be summarized by saying that $|\mu|_{SV}$ respects the limit of sequences that decrease to the empty set.

\begin{lemma}
Let $\mu$ be a vector measure. For each $E\in\Sigma$ the inequality
\begin{equation*}
    |\mu|_{SV}(E) \leq 4 \sup_{F\subset E} |\mu(F)|
\end{equation*}
is satisfied.
\end{lemma}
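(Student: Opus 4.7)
The plan is to reduce the inequality to the classical scalar fact that the total variation of a complex measure is controlled, up to a factor of $4$, by the supremum of its values on measurable subsets, and then to take the supremum over $\Lambda\in B_{X^{\ast}}$.

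First I would fix $\Lambda\in B_{X^{\ast}}$ and work with the complex measure $\mu_{\Lambda}$. Splitting $\mu_{\Lambda}=\nu_{1}+i\nu_{2}$ into real and imaginary parts, each $\nu_{j}$ is a real signed measure; by the Hahn--Jordan decomposition there is a partition $E=E^{+}_{j}\uplus E^{-}_{j}$ (relative to $E$) such that
\begin{equation*}
|\nu_{j}|_{TV}(E)=\nu_{j}(E^{+}_{j})-\nu_{j}(E^{-}_{j})\le 2\sup_{F\subset E}|\nu_{j}(F)|.
\end{equation*}
Since $|\nu_{j}(F)|\le|\mu_{\Lambda}(F)|$ and $|\mu_{\Lambda}|_{TV}(E)\le|\nu_{1}|_{TV}(E)+|\nu_{2}|_{TV}(E)$, combining these yields the scalar inequality
\begin{equation*}
|\mu_{\Lambda}|_{TV}(E)\le 4\sup_{F\subset E}|\mu_{\Lambda}(F)|.
\end{equation*}

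Next I would translate this back to the vector measure. Since $\mu_{\Lambda}(F)=\Lambda(\mu(F))$ and $\|\Lambda\|\le 1$, we have $|\mu_{\Lambda}(F)|\le|\mu(F)|$ for every $F\subset E$, hence
\begin{equation*}
|\mu_{\Lambda}|_{TV}(E)\le 4\sup_{F\subset E}|\mu(F)|.
\end{equation*}
Taking the supremum over $\Lambda\in B_{X^{\ast}}$, which by the preceding subsection is exactly how $|\mu|_{SV}(E)$ is computed, produces the desired bound.

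No step looks genuinely hard. The only point where one has to be slightly careful is the scalar inequality with the constant $4$: one must apply Hahn--Jordan to the signed measures $\nu_{j}$ restricted to $E$ (rather than globally) so that the extremal sets $E^{\pm}_{j}$ remain subsets of $E$, and one must remember that passing from a complex measure to its real and imaginary parts costs a factor of $2$ on top of the factor $2$ produced by Hahn--Jordan, giving the $4$ in the final constant.
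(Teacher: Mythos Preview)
Your proof is correct and follows essentially the same route as the paper: decompose the complex scalar measure $\mu_{\Lambda}=\Lambda\circ\mu$ into the positive and negative parts of its real and imaginary components via Hahn--Jordan, pick up the factor $4$, and then pass to the supremum over $\Lambda$ in the unit ball. The only cosmetic difference is that the paper keeps the supremum over $\Lambda$ on the outside throughout and closes by swapping suprema and using $\sup_{\Lambda\in B_{X^{\ast}}}|\Lambda(\mu(F))|=|\mu(F)|$, whereas you first establish the scalar bound $|\mu_{\Lambda}|_{TV}(E)\le 4\sup_{F\subset E}|\mu_{\Lambda}(F)|$ and then invoke $|\Lambda(\mu(F))|\le|\mu(F)|$ before taking the supremum.
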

\begin{proof}
By definition of semivariation, we have that
\begin{align*}
    |\mu|_{SV}(E) &= \sup_{\Lambda\in X^{\ast}} |\Lambda\circ\mu|(E)\\
    &\leq \sup_{\Lambda\in X^{\ast}}\Big(|(Re\;\Lambda\circ\mu)^{+}|(E) + |(Re\;\Lambda\circ\mu)^{-}|(E) + |(Im\;\Lambda\circ\mu)^{+}|(E) + |(Im\;\Lambda\circ\mu)^{-}|(E)\Big)\\
    &\leq \sup_{\Lambda\in X^{\ast}}\sup_{F\subset E}|(Re\;\Lambda\circ\mu)^{+}|(F) + \sup_{\Lambda\in X^{\ast}}\sup_{F\subset E}|(Re\;\Lambda\circ\mu)^{-}|(F)\\
    &\quad + \sup_{\Lambda\in X^{\ast}}\sup_{F\subset E}|(Im\;\Lambda\circ\mu)^{+}|(F) + \sup_{\Lambda\in X^{\ast}}\sup_{F\subset E}|(Im\;\Lambda\circ\mu)^{-}|(F)\\
    &\leq 4 \sup_{\Lambda\in X^{\ast}}\sup_{F\subset E} |\Lambda \circ \mu (F)|\\
    &= 4 \sup_{F\subset E}\sup_{\Lambda\in X^{\ast}} |\Lambda \circ \mu (F)|\\
    &= 4 \sup_{F\subset E}|\mu (F)|.
\end{align*}
\end{proof}

\begin{proposition}\label{PropContinuidadSemivariaciónVectorial}
If $\mu$ is a vector measure then its semivariation is continuous.
\end{proposition}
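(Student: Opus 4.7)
The plan is to exploit the lemma just proved, which bounds $|\mu|_{SV}(E)$ by $4\sup_{F\subset E}|\mu(F)|$. Thus, given any $E_{n}\searrow\emptyset$, it suffices to verify that $\sup_{F\subset E_{n}}|\mu(F)|\to 0$. Since the supremum is monotone decreasing in $n$ (because $E_{n+1}\subset E_{n}$), negating the conclusion amounts to the existence of some $\epsilon>0$ with $\sup_{F\subset E_{n}}|\mu(F)|>\epsilon$ for every $n$. My goal is to turn that assumption into a pairwise disjoint sequence $(G_{k})$ with $|\mu(G_{k})|>\epsilon/2$, which will conflict with the $\sigma$-additivity of $\mu$.

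First I would record two elementary consequences of $\sigma$-additivity that will power the construction. The first is continuity from above at $\emptyset$: if $B_{n}\searrow\emptyset$, then $B_{1}=B_{N}\uplus\biguplus_{n<N}(B_{n}\setminus B_{n+1})$, and the convergence of $\sum_{n}\mu(B_{n}\setminus B_{n+1})=\mu(B_{1})$ in $X$ forces $\mu(B_{N})\to 0$. The second is that for any disjoint sequence $(G_{k})$ the series $\sum_{k}\mu(G_{k})$ converges to $\mu\bigl(\biguplus_{k}G_{k}\bigr)$ in $X$, which in particular implies $\mu(G_{k})\to 0$.

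With these in hand, I would build the $G_{k}$ recursively. Set $n_{1}=1$. Having chosen $n_{k}$, pick $F_{k}\subset E_{n_{k}}$ with $|\mu(F_{k})|>\epsilon$; since $F_{k}\cap E_{n}\searrow\emptyset$ as $n\to\infty$, the continuity from above yields some $n_{k+1}>n_{k}$ such that $|\mu(F_{k}\cap E_{n_{k+1}})|<\epsilon/2$. Define $G_{k}=F_{k}\setminus E_{n_{k+1}}$. Then $|\mu(G_{k})|\geq|\mu(F_{k})|-|\mu(F_{k}\cap E_{n_{k+1}})|>\epsilon/2$, and disjointness follows from $G_{k}\cap E_{n_{k+1}}=\emptyset$ together with $G_{j}\subset E_{n_{j}}\subset E_{n_{k+1}}$ for $j>k$. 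The second consequence of $\sigma$-additivity now gives $\mu(G_{k})\to 0$, contradicting $|\mu(G_{k})|>\epsilon/2$ for all $k$, and the proof is complete.

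The main obstacle is precisely this simultaneous use of both forms of countable additivity, coordinated by a careful choice of the indices $n_{k}$: the residues $F_{k}\cap E_{n_{k+1}}$ must be forced small so that $G_{k}$ retains most of the norm of $\mu(F_{k})$, while at the same time the $G_{k}$ must sit inside the shrinking annuli $E_{n_{k}}\setminus E_{n_{k+1}}$ to secure pairwise disjointness.
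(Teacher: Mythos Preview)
Your proof is correct and follows essentially the same gliding-hump strategy as the paper: construct a pairwise disjoint sequence sitting inside the annuli $E_{n_k}\setminus E_{n_{k+1}}$ on which $|\mu(\cdot)|$ is bounded away from zero, and contradict $\sigma$-additivity. The only cosmetic difference is that the paper selects the indices $n_k$ via the scalar total variations $|\mu_{\Lambda_k}|$ and then invokes the lemma to locate each $F_k$ inside the resulting annulus, whereas you invoke the lemma at the outset to reduce to $\sup_{F\subset E_n}|\mu(F)|\to 0$ and then choose the $n_k$ using continuity from above of $\mu$ itself; your version is marginally more direct and avoids the paper's (unnecessary, and in infinite dimensions not generally valid) appeal to absolute convergence at the end.
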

\begin{proof}
Let $(E_{n})_{n\in\mathbb{N}}$ be a sequence of measurable sets that decreases to the empty set. If $|\mu|_{SV}$ is not continuous then there exists $\epsilon > 0$ such that for each $n\in\mathbb{N}$ there exists $\Lambda_{n}\in B_{X^{\ast}}$ such that
\begin{equation*}
    |\mu_{\Lambda_{n}}|(E_{n}) \geq \epsilon.
\end{equation*}
Since $|\mu_{\Lambda_{1}}|$ is a measure we have that $\lim_{n\to\infty} |\mu_{\Lambda_{1}}|(E_{n}) = 0$, because of which there exists $n_{2} > 1$ such that
\begin{equation*}
    |\mu_{\Lambda_{1}}|(E_{n_{2}}) < \frac{\epsilon}{2}.
\end{equation*}
Using this and the previous lemma we obtain that
\begin{align*}
    \frac{\epsilon}{2} &< |\mu_{\Lambda_{1}}|(E_{1}\setminus E_{n_{2}})\\
    &\leq \sup_{\Lambda\in B_{X^{\ast}}} |\mu_{\Lambda}|(E_{1}\setminus E_{n_{2}})\\
    &= |\mu|_{SV}(E_{1}\setminus E_{n_{2}})\\
    &\leq 4 \sup_{F\subset E_{1}\setminus E_{n_{2}}}|\mu(F)|.
\end{align*}
In consequence, there exists $F_{1} \subset E_{1}\setminus E_{n_{2}}$ such that
\begin{equation*}
    |\mu(F_{1})| \geq \frac{\epsilon}{8}.
\end{equation*}
Through an induction argument we obtain an increasing sequence of natural numbers $(n_{k})_{k\in\mathbb{N}}$ and a sequence of measurable sets $(F_{k})_{k\in\mathbb{N}}$ such that $F_{k} \subset E_{n_{k}}\setminus E_{n_{k+1}}$ and
\begin{equation}\label{EqSumaFea}
    |\mu(F_{k})| \geq \frac{\epsilon}{8}.
\end{equation}
Since the sets $F_{k}$ are disjoint and $\mu$ is $\sigma$-additive we have that
\begin{equation*}
    \mu\left(\biguplus_{k\in\mathbb{N}}F_{k}\right) = \sum_{k\in\mathbb{N}} \mu(F_{k}),
\end{equation*}
where the last series is invariant under reordering and therefore absolutely convergent. This contradicts inequality (\ref{EqSumaFea}).
\end{proof}

\begin{lemma}
Let $\mu$ be a vector projection family and $g$ a properly integrable function. The function $\mu^{g}\colon \Sigma \to X$ defined as
\begin{equation*}
    \mu^{g}(E) = \int_{E}g\;d\mu
\end{equation*}
is a vector measure with semivariation
\begin{equation*}
    |\mu^{g}|_{SV}(E) = \sup_{\Lambda\in B_{X^{\ast}}}\int_{E}|g|\;d|\mu_{\Lambda}|.
\end{equation*}
In particular, this last quantity is finite.
\end{lemma}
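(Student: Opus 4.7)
My plan is to verify three claims in turn: (a) that $\mu^{g}(E)$ really lies in $X$ (identified with $J(X)\subset X^{\ast\ast}$) for every $E\in\Sigma$; (b) that $\mu^{g}$ is countably additive; and (c) the semivariation formula. The delicate point is (a), since proper integrability is only assumed for $g$ itself, and must be propagated to every $g\chi_{E}$.

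For (a), I would introduce the auxiliary family $\nu$ given by $\nu_{\Lambda}(A):=\int_{A}g\,d\mu_{\Lambda}$. First I would verify that $\nu$ is itself a vector projection family: linearity of $\Lambda\mapsto\nu_{\Lambda}$ follows from linearity of $\Lambda\mapsto\mu_{\Lambda}$ together with linearity of the scalar integral in the measure; setwise continuity along norm-convergent nets $\Lambda_{i}\to\Lambda$ follows from the earlier lemma on bounded-total-variation setwise-convergent nets integrating bounded functions, using $g\in L^{\infty}(\mu)$ and the estimate $|\mu_{\Lambda_{i}}|_{TV}\leq\|\Lambda_{i}\||\mu|_{SV}$. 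By the preceding characterization proposition, $\nu$ is generated by a vector measure precisely when $\Lambda\mapsto\nu_{\Lambda}(E)$ is $\tau_{\omega^{\ast}}$-continuous for every $E$. By the Krein--\v Smulian theorem it suffices to check this on each closed ball of $X^{\ast}$. On a bounded net $\Lambda_{i}\xrightarrow{\omega^{\ast}}\Lambda$, proper integrability of $g$ gives $\nu_{\Lambda_{i}}(\Omega)\to\nu_{\Lambda}(\Omega)$, and the uniform bound $|\nu_{\Lambda_{i}}|_{TV}\leq\|g\|_{\infty}|\mu|_{SV}$ holds. The remaining task, namely upgrading convergence at $\Omega$ to convergence at every $E\in\Sigma$, is the main obstacle; the strategy would be a tightness-type argument exploiting the uniform total-variation control together with the fact that $g$ can be uniformly approximated by simple functions.

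Granting (a), step (b) is a short consequence of the convergence theorems already proved. For disjoint $(E_{n})_{n\in\mathbb{N}}$ with union $E$, the partial sums $g\chi_{E_{1}\cup\cdots\cup E_{N}}$ converge pointwise to $g\chi_{E}$ and are dominated by $|g|\in L^{\infty}(\mu)$. The Dominated Convergence Theorem already established for projection families gives $\sum_{n\leq N}\mu^{g}(E_{n})\xrightarrow{\omega^{\ast}}\mu^{g}(E)$ in $X^{\ast\ast}$. Since all terms lie in $J(X)$ by (a), this is weak convergence of the partial sums in $X$, and the Orlicz--Pettis theorem then upgrades weak $\sigma$-additivity to norm $\sigma$-additivity, so $\mu^{g}$ is a genuine vector measure.

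For (c), fix $\Lambda\in X^{\ast}$: by construction and the defining property of the integral, $\Lambda\circ\mu^{g}$ is the scalar measure of density $g$ with respect to $\mu_{\Lambda}$, so the classical identity for the total variation of a density measure yields $|\Lambda\circ\mu^{g}|(E)=\int_{E}|g|\,d|\mu_{\Lambda}|$. Taking the supremum over $\Lambda$ in the closed unit ball of $X^{\ast}$ gives the displayed formula, and the estimate $\int_{E}|g|\,d|\mu_{\Lambda}|\leq\|g\|_{\infty}|\mu|_{SV}<\infty$ yields finiteness. The main obstacle, then, lives entirely within step (a): the weak-$\ast$ continuity upgrade from the single functional $\Lambda\mapsto\int_{\Omega}g\,d\mu_{\Lambda}$ to the whole family $\Lambda\mapsto\int_{E}g\,d\mu_{\Lambda}$ indexed by $E\in\Sigma$.
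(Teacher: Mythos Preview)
Your treatment of parts (b) and (c) is essentially the paper's proof: the paper also reduces $\sigma$-additivity to weak $\sigma$-additivity via the Orlicz--Pettis theorem (after observing that each $\Lambda\circ\mu^{g}$ is a complex measure), and obtains the semivariation formula from the standard identity $|\Lambda\circ\mu^{g}|(E)=\int_{E}|g|\,d|\mu_{\Lambda}|$ for the total variation of a measure with density. On those points you match the paper exactly.

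On step (a) you have correctly isolated a genuine gap, and the paper does not address it either: its proof simply opens with ``It is enough to verify $\sigma$-additivity,'' taking $\mu^{g}(E)\in J(X)$ for granted. With the paper's definition of proper integrability (which only requires $\int_{\Omega}g\,d\mu\in J(X)$), this is not automatic, and in fact the conclusion can fail. Take $X=c_{0}$, $\Omega=\{0,1\}$ with the discrete $\sigma$-algebra, choose any $\phi\in X^{\ast\ast}\setminus J(X)$, and set $\mu_{\Lambda}(\{0\})=\phi(\Lambda)$, $\mu_{\Lambda}(\{1\})=-\phi(\Lambda)$. This is a vector projection family; the function $g\equiv 1$ is properly integrable since $\int_{\Omega}1\,d\mu=0\in J(X)$; yet $\mu^{g}(\{0\})=\phi\notin J(X)$. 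Consequently your tightness/uniform-approximation strategy cannot be completed under the stated hypothesis, and neither can any other argument: the lemma as written needs either a stronger assumption on $g$ (for instance that $g\chi_{E}$ be properly integrable for every $E\in\Sigma$, which is the reading actually required in the subsequent Dominated Convergence proof), or $\mu^{g}$ should be viewed as a map into $X^{\ast\ast}$. Your instinct that the obstacle lives entirely in (a) was exactly right.
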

\begin{proof}
It is enough to verify $\sigma$-additivity. Since
\begin{equation*}
    \Lambda\circ \mu^{g} (E) = \int_{E}f\;d\mu_{\Lambda}
\end{equation*}
is a measure for each $\Lambda\in X^{\ast}$, we have that $\mu^{g}$ is $\sigma$-additive with respect to $\tau_{\omega}$. It follows from the Orlicz-Pettis theorem that $\mu^{g}$ is $\sigma$-additive and thus a vector measure. To compute its semivariation we note that
\begin{align*}
    |\mu^{g}|_{SV}(E) &= \sup_{\Lambda\in B_{X^{\ast}}} |\Lambda\circ \mu^{g}|(E)\\
    &= \sup_{\Lambda\in B_{X^{\ast}}} \int_{E}|g|\;d|\mu_{\Lambda}|.
\end{align*}
\end{proof}

\begin{theorem}[Dominated Convergence for Properly Integrable Functions]
Let $\mu$ be a vector projection family. If $(f_{n})_{n\in\mathbb{N}}$ is a sequence of properly integrable functions such that $f_{n} \xrightarrow[]{pw} f$ and there exists a properly integrable function $g$ such that $|f_{n}| \leq g$ for each $n\in\mathbb{N}$ then $f$ is properly integrable and
\begin{equation*}
    \int f_{n}\;d\mu \to \int f\;d\mu.
\end{equation*}
\end{theorem}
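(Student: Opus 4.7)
The plan is to reduce the problem to a Lebesgue-style dominated convergence argument whose tail error is controlled by the semivariation of the auxiliary vector measure $\mu^{g}$ from the preceding lemma, rather than by $|\mu|_{SV}$ directly (which we have no reason to expect to be continuous for a general projection family).

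First I would observe that, since $g\in L^{\infty}(\mu)$ (proper integrability is only defined inside $L^{\infty}(\mu)$) and $|f_{n}|\leq g$, the pointwise limit satisfies $|f|\leq g$, so $f\in L^{\infty}(\mu)\subset L^{1}(\mu)$ and $\int f\,d\mu$ is a well-defined element of $X^{\ast\ast}$ by the first proposition of the section. For each $\delta>0$ set $B_{n,\delta}=\bigcup_{m\geq n}\{|f_{m}-f|\geq\delta\}$; pointwise convergence gives $B_{n,\delta}\searrow\emptyset$ as $n\to\infty$.

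The central estimate is then: for $\Lambda\in B_{X^{\ast}}$ and $m\geq n$,
\begin{equation*}
    \left|\int(f_{m}-f)\,d\mu_{\Lambda}\right|\leq \int_{B_{n,\delta}}|f_{m}-f|\,d|\mu_{\Lambda}|+\int_{\Omega\setminus B_{n,\delta}}|f_{m}-f|\,d|\mu_{\Lambda}|\leq 2|\mu^{g}|_{SV}(B_{n,\delta})+\delta|\mu|_{SV},
\end{equation*}
where the first summand uses $|f_{m}-f|\leq 2g$ together with the formula $|\mu^{g}|_{SV}(E)=\sup_{\Lambda}\int_{E}|g|\,d|\mu_{\Lambda}|$ from the preceding lemma, and the second uses $|\mu_{\Lambda}|_{TV}\leq|\mu|_{SV}$ for $\Lambda$ in the unit ball together with the finiteness of $|\mu|_{SV}$ established earlier. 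Now the preceding lemma says $\mu^{g}$ is a genuine vector measure, so Proposition~\ref{PropContinuidadSemivariaciónVectorial} applies and $|\mu^{g}|_{SV}(B_{n,\delta})\to 0$ as $n\to\infty$. Given $\epsilon>0$, I would first choose $\delta$ with $\delta|\mu|_{SV}<\epsilon/2$ and then $n$ large enough that $2|\mu^{g}|_{SV}(B_{n,\delta})<\epsilon/2$; taking the supremum over $\Lambda\in B_{X^{\ast}}$ yields $\|\int f_{m}\,d\mu-\int f\,d\mu\|_{X^{\ast\ast}}<\epsilon$ for all $m\geq n$, i.e.\ norm convergence in $X^{\ast\ast}$.

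Finally, each $\int f_{n}\,d\mu$ lies in $J(X)$ by hypothesis, and $J(X)$ is norm-closed in $X^{\ast\ast}$ because $J$ is an isometric embedding of the Banach space $X$; therefore the norm limit $\int f\,d\mu$ also lies in $J(X)$, proving that $f$ is properly integrable. The main obstacle is precisely the failure of $|\mu|_{SV}$ to be continuous for a general projection family: without the step that identifies $\mu^{g}$ as a bona fide vector measure (whose semivariation therefore enjoys continuity), the tail integral over $B_{n,\delta}$ cannot be made uniformly small in $\Lambda$, and the dominated convergence argument collapses.
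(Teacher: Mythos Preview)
Your argument is correct and shares its core estimate with the paper's proof: both split the integral of $f_{m}-f$ over a set where $|f_{m}-f|$ is small (controlled by $\delta\,|\mu|_{SV}$) and a residual set on which $|f_{m}-f|\leq 2g$ (controlled by $|\mu^{g}|_{SV}$), and both rely on the preceding lemma identifying $\mu^{g}$ as a genuine vector measure together with Proposition~\ref{PropContinuidadSemivariaciónVectorial} to drive the residual term to zero uniformly in $\Lambda$.

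The difference lies in how proper integrability of $f$ is obtained. The paper works directly at the level of the bounded weak-$\ast$ topology: it takes a bounded net $\Lambda_{i}\xrightarrow{\omega^{\ast}}\Lambda$, runs a three-term estimate
\[
\textstyle\left|\int f\,d\mu_{\Lambda_{i}}-\int f\,d\mu_{\Lambda}\right|\leq\left|\int (f-f_{N})\,d\mu_{\Lambda_{i}}\right|+\left|\int f_{N}\,d\mu_{\Lambda_{i}}-\int f_{N}\,d\mu_{\Lambda}\right|+\left|\int (f_{N}-f)\,d\mu_{\Lambda}\right|,
\]
and uses $\int f_{N}\,d\mu\in J(X)$ for the middle term, concluding $\tau_{b\omega^{\ast}}$-continuity of $\int f\,d\mu$ and hence membership in $J(X)$; norm convergence is then read off separately from the same uniform estimate. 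You instead establish norm convergence $\int f_{n}\,d\mu\to\int f\,d\mu$ in $X^{\ast\ast}$ first and then invoke the norm-closedness of $J(X)$ (as the isometric image of a Banach space) to conclude $\int f\,d\mu\in J(X)$. Your route is more economical---it avoids the net argument entirely---and, incidentally, your use of the monotone sets $B_{n,\delta}=\bigcup_{m\geq n}\{|f_{m}-f|\geq\delta\}$ is cleaner than the paper's $E_{n}(\epsilon)=\{|f-f_{n}|\geq\epsilon\}$, which are not obviously decreasing. The paper's approach, on the other hand, makes the $\tau_{b\omega^{\ast}}$-continuity of $\int f\,d\mu$ explicit, which fits the surrounding narrative about characterising elements of $J(X)$ via that topology.
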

Note that $\mu$ need not have continuous semivariation.
\begin{proof}
We first show that $\int f\;d\mu \in X^{\ast\ast}$ is continuous with respect to $\tau_{b\omega^{\ast}}$. This would imply that it is continuous with respect to $\tau_{\omega^{\ast}}$ and therefore belongs to $J(X)$. To this end we consider a bounded net $(\Lambda_{i})_{i\in I}$ such that $\Lambda_{i} \xrightarrow[]{\omega^{\ast}} \Lambda$. We assume that $(\Lambda_{i})_{i\in I}$ belongs to $B_{X^{\ast}}$. If we define
\begin{equation*}
    E_{n}(\epsilon) = \{x\in\Omega\;|\;|f(x) - f_{n}(x)| \geq \epsilon\}
\end{equation*}
it follows that
\begin{align*}
    \left| \int f\;d\mu_{\Lambda_{i}} - \int f_{n}\;d\mu_{\Lambda_{i}} \right| &\leq \int |f - f_{n}|\;d|\mu_{\Lambda_{i}}|\\
    &\leq \epsilon |\mu|_{SV} + 2 \sup_{\Lambda\in B_{X^{\ast}}}\int_{E_{n}} g\;d|\mu_{\Lambda}|\\
    &\leq \epsilon |\mu|_{SV} + 2 |\mu^{g}|_{SV}(E_{n}).
\end{align*}
Since $\mu^{g}$ is a vector measure, the proposition \ref{PropContinuidadSemivariaciónVectorial} implies that $\mu^{g}$ has continuous semivariation. Since $(E_{n}(\epsilon))_{n\in\mathbb{N}}$ decreases to the empty set $\lim_{n\to\infty} |\mu_{g}|_{SV}(E_{n}) = 0$, thus the right-hand side of the previous estimate can be made arbitrarily small uniformly in $i\in I$. This allows us to conclude that given $\epsilon > 0$ there exists $N_{\epsilon}\in\mathbb{N}$ such that
\begin{equation}\label{EstimadoTCD}
    \left| \int f\;d\mu_{\Lambda_{i}} - \int f_{N_{\epsilon}}\;d\mu_{\Lambda_{i}} \right| < \frac{\epsilon}{3}
\end{equation}
for each $i\in I$. $N_{\epsilon}$ can be made larger, if necessary, to assume that the same happens if $\Lambda_{i}$ is replaced by $\Lambda$. Additionally, since $\int f_{N_{\epsilon}}\;d\mu\in J(X)$ and $\Lambda_{i} \xrightarrow[]{\omega^{\ast}} \Lambda$, there exists $i_{0}\in I$ such that if $i\succeq i_{0}$ then
\begin{equation*}
    \left| \int f_{N_{\epsilon}}\;d\mu_{\Lambda_{i}} - \int f_{N_{\epsilon}}\;d\mu_{\Lambda} \right| < \frac{\epsilon}{3}.
\end{equation*}
We conclude that if $i\succeq i_{0}$ then
\begin{align*}
    \left| \int f\;d\mu_{\Lambda_{i}} - \int f\;d\mu_{\Lambda} \right| &\leq \left| \int f\;d\mu_{\Lambda_{i}} - \int f_{N_{\epsilon}}\;d\mu_{\Lambda_{i}} \right| + \left| \int f_{N_{\epsilon}}\;d\mu_{\Lambda_{i}} - \int f_{N_{\epsilon}}\;d\mu_{\Lambda} \right| + \left| \int f_{N_{\epsilon}}\;d\mu_{\Lambda} - \int f\;d\mu_{\Lambda} \right|\\
    &< \frac{\epsilon}{3} + \frac{\epsilon}{3} + \frac{\epsilon}{3}\\
    &= \epsilon.
\end{align*}
This implies that $\int f\;d\mu \in J(X)$, therefore we will treat it as an element of $X$. To obtain the convergence $\int f_{n}\;d\mu \to \int f\;d\mu$ we note that the estimate (\ref{EstimadoTCD}) extends to each element of $B_{X^{\ast}}$, that is to say that there exists $N\in\mathbb{N}$ such that if $n\geq N$ then
\begin{equation*}
    \left| \int f\;d\mu_{\Lambda} - \int f_{n}\;d\mu_{\Lambda} \right| < \frac{\epsilon}{2}
\end{equation*}
for each $\Lambda\in X^{\ast}$. Since the functions in the estimate are properly integrable we obtain that
\begin{equation*}
    \left| \Lambda\left( \int f\;d\mu - \int f_{n}\;d\mu \right) \right| < \frac{\epsilon}{2}
\end{equation*}
for each $\Lambda\in X^{\ast}$. It follows that
\begin{align*}
    \left| \int f\;d\mu - \int f_{n}\;d\mu \right| &= \sup_{\Lambda\in X^{\ast}} \left| \Lambda\left( \int f\;d\mu - \int f_{n}\;d\mu \right) \right|\\
    &\leq \frac{\epsilon}{2}\\
    &< \epsilon.
\end{align*}
This allows us to conclude the result.
\end{proof}

\section{Operator-Valued Measures}

\begin{definition}
Let $X$ be a Banach space and $(\Omega,\Sigma)$ a measurable space. An \textbf{operator projection family} is a collection of measures in $\Omega$, denoted as
\begin{equation*}
    \mu = \{\mu_{\Lambda,x}\;|\;\Lambda\in X^{\ast},\;x\in X\},
\end{equation*}
with the following properties.
\begin{enumerate}
    \item The function
    \begin{equation*}
    \begin{array}{ccc}
        X^{\ast} \times X & \longrightarrow &\mathcal{M}(\Omega) \\
        (\Lambda,x) & \longmapsto & \mu_{\Lambda,x}
    \end{array}
    \end{equation*}
    defines a bilinear functional.
    \item If $(\Lambda_{i})_{i\in I}$ and $(x_{j})_{j\in J}$ are nets such that $\Lambda_{i} \to \Lambda$ and $x_{j} \to x$ then
    \begin{equation*}
        \mu_{\Lambda_{i},x} \xrightarrow[]{set} \mu_{\Lambda,x}
    \end{equation*}
    and
    \begin{equation*}
        \mu_{\Lambda,x_{j}} \xrightarrow[]{set} \mu_{\Lambda,x}.
    \end{equation*}
\end{enumerate}
\end{definition}

\begin{definition}
Let $X$ be a Banach space, $(\Omega,\Sigma)$ a measurable space, and $\mu$ and operator projection family.
\begin{enumerate}
    \item We will say that $E\in\Sigma$ is a \textbf{null set} if $\mu_{\Lambda,x}(E) = 0$ for each $\Lambda\in X^{\ast}$ and $x\in X$. In this case, we will write $E\in \mathcal{N}(\mu)$.

    \item We will say that a function $f\colon \Omega \to \mathbb{C}$ is \textbf{essentially bounded} with respect to $\mu$ if there exists a null set $E$ such that $f$ is bounded in $\Omega\setminus E$. In this case, we will write $f\in L^{\infty}(\mu)$.

    \item We will say that a measurable function $f\colon \Omega \to \mathbb{C}$ is \textbf{integrable} if $f\in L^{1}(\mu_{\Lambda,x})$ for each $\Lambda\in X^{\ast}$ and $x\in X$. In this case, we will write $f\in L^{1}(\mu)$.
\end{enumerate}
\end{definition}

We note that since each $\mu_{\Lambda,x}$ is finite we also have that $L^{\infty}(\mu) \subset L^{1}(\mu)$.

\begin{proposition}
Let $\mu$ be an operator projection family. For each $\Lambda\in X^{\ast}$ and $x\in X$ the families
\begin{equation*}
    \mu(x) = \{\mu_{\Lambda,x}\;|\;\Lambda\in X^{\ast}\}
\end{equation*}
and
\begin{equation*}
    \Lambda(\mu) = \{\mu_{\Lambda,x}\;|\;x\in X\}
\end{equation*}
are vector projection families.
\end{proposition}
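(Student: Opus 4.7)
The plan is to verify the two defining properties of a vector projection family---linearity of the parameterization into $\mathcal{M}(\Omega)$ and setwise continuity---for each of the two fixed-slot restrictions, invoking the bilinear and separate-continuity data packaged in the operator projection family axioms.

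First I will handle $\mu(x) = \{\mu_{\Lambda,x}\mid \Lambda\in X^{\ast}\}$, regarded as an $X^{\ast}$-indexed family of measures. Fixing the second slot in the bilinear functional $(\Lambda,x)\longmapsto \mu_{\Lambda,x}$ supplied by condition (1) of the operator projection family definition yields a linear map $\Lambda\longmapsto \mu_{\Lambda,x}$, verifying the first axiom. For the continuity axiom, any convergent net $\Lambda_{i}\to \Lambda$ in $X^{\ast}$ yields $\mu_{\Lambda_{i},x} \xrightarrow[]{set} \mu_{\Lambda,x}$ directly from the first half of condition (2), which is exactly what is required.

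Next I will handle $\Lambda(\mu) = \{\mu_{\Lambda,x}\mid x\in X\}$. Here the family is parameterized by $x\in X$, which I interpret via the canonical embedding $J\colon X\hookrightarrow X^{\ast\ast} = (X^{\ast})^{\ast}$, so that $\Lambda(\mu)$ is viewed as a vector projection family in the Banach space $X^{\ast}$. Fixing the first slot in the bilinear map again gives linearity of $x\longmapsto \mu_{\Lambda,x}$, and the setwise continuity condition---if $x_{j}\to x$ in $X$ then $\mu_{\Lambda,x_{j}} \xrightarrow[]{set} \mu_{\Lambda,x}$---is precisely the second half of condition (2).

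I do not anticipate any genuine obstacle: the proposition is essentially bookkeeping, transferring the defining data of the operator projection family one slot at a time. The only mildly subtle point is reconciling the index sets in the second assertion, where one must pass from $X$ to $J(X)\subset X^{\ast\ast}$ to match the convention that a vector projection family be indexed by a dual space; the weak topology on $X$ corresponds via $J$ to the weak-$\ast$ topology on $J(X)$, so the continuity hypothesis translates cleanly and no additional argument is needed.
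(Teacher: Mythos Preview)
Your proposal is correct and follows essentially the same approach as the paper: both arguments simply observe that fixing one slot in the bilinear, separately continuous map $(\Lambda,x)\mapsto\mu_{\Lambda,x}$ immediately yields the linearity and setwise continuity required of a vector projection family. The paper's proof is a single sentence to this effect; your version spells out each verification explicitly and additionally flags the convention issue for $\Lambda(\mu)$ (that the index set is $X$ rather than literally a dual space), a point the paper passes over in silence.
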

\begin{proof}
The bilinearity and separate continuity of $\mu$ imply the lineality and continuity of $\mu(x)$ and $\Lambda(\mu)$.
\end{proof}

The immediate corollary is the following.

\begin{corollary}
Let $\mu$ be an operator projection family and $f\in L^{1}(\mu)$. The function
\begin{equation*}
\begin{array}{cccc}
    \int_{\Omega}f\;d\mu \colon& X \times X^{\ast\ast} & \longrightarrow & \mathbb{C}\\
    & (\Lambda,x) & \longmapsto & \int_{\Omega}f\;d\mu_{\Lambda,x}
\end{array}
\end{equation*}
is bilinear and continuous.
\end{corollary}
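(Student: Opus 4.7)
The plan is to reduce both bilinearity and continuity to the single-variable case by applying the preceding proposition together with the first Proposition of Section~2, and then upgrade the resulting separate boundedness to joint continuity via the Uniform Boundedness Principle.

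Bilinearity is immediate from the definition: property~(1) of an operator projection family says that $(\Lambda, x) \mapsto \mu_{\Lambda, x}$ is bilinear into $\mathcal{M}(\Omega)$, and integration against a complex measure is linear in the measure. Composing these two linearities yields bilinearity of $(\Lambda, x) \mapsto \int f\, d\mu_{\Lambda, x}$.

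For continuity, I would fix $x \in X$. By the preceding proposition, $\mu(x) = \{\mu_{\Lambda, x} : \Lambda \in X^{\ast}\}$ is a vector projection family, and $f \in L^{1}(\mu) \subset L^{1}(\mu(x))$. The first Proposition of Section~2 then yields $\int f\, d\mu(x) \in X^{\ast\ast}$, which is precisely the assertion that $\Lambda \mapsto \int f\, d\mu_{\Lambda, x}$ is a bounded linear functional on $X^{\ast}$. A symmetric argument applied to $\Lambda(\mu)$ with $\Lambda$ fixed shows that $x \mapsto \int f\, d\mu_{\Lambda, x}$ is bounded and linear on $X$. Since both $X^{\ast}$ and $X$ are Banach spaces, a standard application of the Uniform Boundedness Principle upgrades this separate boundedness of a bilinear form to a uniform estimate $|\int f\, d\mu_{\Lambda, x}| \leq M \|\Lambda\| \|x\|$, which is equivalent to joint continuity.

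The main technicality I anticipate is the slight abuse in the preceding proposition, where $\Lambda(\mu)$ is indexed by $X$ rather than by the dual of a Banach space. This is not a genuine obstacle: the proof that the integral lies in $X^{\ast\ast}$ uses only setwise continuity of the measure-valued map (granted by property~(2) of operator projection families) together with the Uniform Boundedness Principle, so the argument of the first Proposition of Section~2 transposes without modification.
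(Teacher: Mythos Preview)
Your proposal is correct and follows essentially the same route as the paper: bilinearity from the bilinearity of $(\Lambda,x)\mapsto\mu_{\Lambda,x}$, separate continuity from applying the vector-projection-family result to $\mu(x)$ and $\Lambda(\mu)$, and then the standard Banach-space fact (via the Uniform Boundedness Principle) that separately continuous bilinear maps are jointly continuous. Your explicit remark about the indexing of $\Lambda(\mu)$ by $X$ rather than a dual space, and why the argument of the first proposition still applies, is a point the paper glosses over.
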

\begin{proof}
Bilineality follows from the bilineality of $(\Lambda,x) \longmapsto \mu_{\Lambda,x}$. Since we are working in Banach spaces, continuity is equivalent to separate continuity in each of the variables. The separate continuity in each variable follows from the corresponding result for vector projection families applied to the families $\Lambda(\mu)$ and $\mu(x)$.
\end{proof}

Given an operator projection family $\mu$ and $f\in L^{1}(\mu)$, the function
\begin{equation*}
\begin{array}{cccc}
    \int_{\Omega}f\;d\mu(x)\colon& X^{\ast} & \longrightarrow &\mathbb{C} \\
    & \Lambda & \longmapsto & \int_{\Omega}f\;d\mu_{\Lambda,x}
\end{array}
\end{equation*}
defines an element of $X^{\ast\ast}$, therefore the application
\begin{equation*}
\begin{array}{cccc}
    \int_{\Omega}f\;d\mu \colon& X & \longrightarrow &X^{\ast\ast}\\
    & x & \longmapsto & \int_{\Omega}f\;d\mu(x)
\end{array}
\end{equation*}
defines an element of $B(X,X^{\ast\ast})$. We define the integral of $f$ with respect to $\mu$ to be this application. Note that this operator is the corresponding one to the bilinear form considered in the previous corollary, that is, the bounded bilinear functional
\begin{equation*}
\begin{array}{cccc}
    \int_{\Omega}f\;d\mu \colon& X \times X^{\ast\ast} & \longrightarrow & \mathbb{C}\\
    & (\Lambda,x) & \longmapsto & \int_{\Omega}f\;d\mu_{\Lambda,x}
\end{array}.
\end{equation*}
Analogously, the function
\begin{equation*}
\begin{array}{cccc}
    \int_{\Omega}f\;d\Lambda(\mu)\colon& X & \longrightarrow &\mathbb{C} \\
    & x & \longmapsto & \int_{\Omega}f\;d\mu_{\Lambda,x}
\end{array}
\end{equation*}
defines an element of $X^{\ast}$, therefore the application
\begin{equation*}
\begin{array}{cccc}
    \int_{\Omega}f\;d\mu \colon& X^{\ast} & \longrightarrow &X^{\ast}\\
    & \Lambda & \longmapsto & \int_{\Omega}f\;d\Lambda(\mu)
\end{array}
\end{equation*}
defines an element of $B(X^{\ast})$. Any of these functions is an acceptable form of the integral, as they are all compatible with each other. However, it will be more natural to consider the integral as an element of $B(X,X^{\ast\ast})$.

\begin{definition}
Let $X$ be a Banach space, $(\Omega,\Sigma)$ a measurable space, and $\mu$ an operator projection family. We will say that $f\in L^{1}(\mu)$ is \textbf{properly integrable} if
\begin{equation*}
    \int_{\Omega} f\;d\mu (X) \subset J(X).
\end{equation*}
\end{definition}

Once again, there is an immediate corollary of the definition.

\begin{corollary}
Let $X$ be a Banach space, $(\Omega,\Sigma)$ a measurable space, and $\mu$ an operator projection family. If $X$ is reflexive then each element of $L^{1}(\mu)$ is properly integrable.
\end{corollary}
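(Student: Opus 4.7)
The plan is to reduce the statement to the analogous corollary already proved for vector projection families, using the fact that the ``slices'' $\mu(x)$ of an operator projection family are themselves vector projection families.

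First I would fix an arbitrary $x\in X$ and invoke the proposition stating that $\mu(x)=\{\mu_{\Lambda,x}\mid \Lambda\in X^{\ast}\}$ is a vector projection family. Next, given $f\in L^{1}(\mu)$, the definition of $L^{1}(\mu)$ for operator projection families says that $f\in L^{1}(\mu_{\Lambda,x})$ for every $\Lambda\in X^{\ast}$ and every $x\in X$, which in particular yields $f\in L^{1}(\mu(x))$ for the chosen $x$. Applying the earlier proposition for vector projection families (``If $f\in L^{1}(\mu)$ then $\int f\,d\mu\in X^{\ast\ast}$'') to the family $\mu(x)$, I conclude that
\begin{equation*}
    \int_{\Omega}f\,d\mu(x)\;\in\;X^{\ast\ast}.
\end{equation*}

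Since this is exactly the element of $X^{\ast\ast}$ that appears in the definition of $\int_{\Omega}f\,d\mu$ as an operator $X\to X^{\ast\ast}$, it remains only to observe that reflexivity of $X$ means the canonical injection $J\colon X\to X^{\ast\ast}$ is surjective, so $J(X)=X^{\ast\ast}$. Consequently $\int_{\Omega}f\,d\mu(x)\in J(X)$ for every $x\in X$, i.e.\ $\int_{\Omega}f\,d\mu(X)\subset J(X)$, which is the definition of $f$ being properly integrable.

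There is no real obstacle: once the preceding vector-projection-family results are invoked, reflexivity trivially closes the argument. The only thing to keep straight is that ``properly integrable'' for operator projection families is defined in terms of $L^{1}(\mu)$ (not $L^{\infty}(\mu)$ as in the vector case), but this causes no difficulty because the underlying proposition placing the integral in $X^{\ast\ast}$ was already stated for all of $L^{1}$.
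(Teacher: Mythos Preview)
Your argument is correct and matches the paper's approach: the paper treats this as an immediate consequence of the definition, since the integral is already constructed as an element of $B(X,X^{\ast\ast})$ and reflexivity gives $J(X)=X^{\ast\ast}$. Your unpacking via the slices $\mu(x)$ is exactly how that construction was justified, and your observation about the $L^{1}$ versus $L^{\infty}$ discrepancy between the vector and operator definitions of ``properly integrable'' is accurate and handled correctly.
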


The simple version of the Dominated Convergence theorem is also immediately translated to the operator case.

\begin{theorem}[Dominated Convergence Theorem]
Let $\mu$ be an operator projection family. If $(f_{n})_{n\in\mathbb{N}}$ is a sequence of functions in $L^{1}(\mu)$ such that $f_{n} \xrightarrow[]{point} f$ and there exists $g\in L^{1}(\mu)$ such that $|f_{n}| \leq g$ for each $n\in\mathbb{N}$ then $f\in L^{1}(\mu)$ and
\begin{equation*}
    \int f_{n}\;d\mu(x) \xrightarrow[]{\omega^{\ast}} \int f\;d\mu(x).
\end{equation*}
\end{theorem}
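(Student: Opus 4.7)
The plan is to reduce this statement to the Dominated Convergence Theorem already established for vector projection families (the second corollary in Section 2.1). The key observation is that the definition of an operator projection family is engineered precisely so that, for each fixed $x \in X$, the slice
\begin{equation*}
    \mu(x) = \{\mu_{\Lambda,x} \mid \Lambda \in X^{\ast}\}
\end{equation*}
is a vector projection family, as shown in the proposition immediately before this theorem. The notions of integrability and integral in the operator setting are built pointwise in $x$, so the reduction should be almost mechanical.

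First I would fix $x \in X$. The hypothesis $f_n \in L^1(\mu)$ unwinds to $f_n \in L^1(\mu_{\Lambda,x})$ for every $\Lambda \in X^{\ast}$, which says exactly that $f_n \in L^1(\mu(x))$; the same remark applies to the dominating function $g$. The pointwise convergence $f_n \xrightarrow[]{pw} f$ and the pointwise inequality $|f_n| \leq g$ are properties of the functions on $\Omega$ and do not depend on which family we are testing against. Therefore the hypotheses of the vector projection family Dominated Convergence Theorem hold for $\mu(x)$, and applying it yields $f \in L^1(\mu(x))$ together with
\begin{equation*}
    \int f_n\;d\mu(x) \xrightarrow[]{\omega^{\ast}} \int f\;d\mu(x).
\end{equation*}

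Finally, since the argument works for every $x \in X$, we conclude that $f \in L^1(\mu_{\Lambda,x})$ for every $\Lambda \in X^{\ast}$ and every $x \in X$, which is the definition of $f \in L^1(\mu)$. The displayed weak-$\ast$ convergence of the integrals is exactly what the theorem claims. I do not anticipate any real obstacle: the proof is purely a slicing argument, and all the work has been absorbed into the vector version. The only potential pitfall is bookkeeping around the definitions of $L^1(\mu)$ and of the integral $\int f\;d\mu(x) \in X^{\ast\ast}$, but these are unravelled directly from the definitions given just before the theorem.
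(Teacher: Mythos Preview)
Your proposal is correct and matches the paper's own proof, which is the single line ``Apply the corresponding result of vector projection families to $\mu(x)$ for each $x\in X$.'' You have simply unpacked this sentence in full detail; the slicing argument you describe is exactly what the paper intends.
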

\begin{proof}
Apply the corresponding result of vector projection families to $\mu(x)$ for each $x\in X$.
\end{proof}

\begin{theorem}[Monotone Convergence Theorem]
Let $(f_{n})_{n\in\mathbb{N}}$ be a non-decreasing sequence of non-negative functions in $L^{1}(\mu)$ such that $f_{n} \xrightarrow[]{pw} f$. If $f\in L^{1}(\mu)$ then
\begin{equation*}
    \int f_{n}\;d\mu(x) \xrightarrow[]{\omega^{\ast}} \int f\;d\mu(x).
\end{equation*}
\end{theorem}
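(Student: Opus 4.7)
The plan is to mimic exactly the strategy used in the preceding Dominated Convergence Theorem for operator projection families, namely to reduce the statement to the already-proved Monotone Convergence Corollary for \emph{vector} projection families by fixing $x\in X$ and working with the associated vector projection family $\mu(x) = \{\mu_{\Lambda,x}\;|\;\Lambda\in X^{\ast}\}$ introduced in the earlier proposition.

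First I would fix an arbitrary $x\in X$ and note that the hypothesis $f_{n}\in L^{1}(\mu)$ means, by definition, $f_{n}\in L^{1}(\mu_{\Lambda,x})$ for every $\Lambda\in X^{\ast}$, and likewise $f\in L^{1}(\mu_{\Lambda,x})$; so both $f_{n}$ and $f$ lie in $L^{1}(\mu(x))$ in the sense of vector projection families. The sequence $(f_{n})$ is still non-decreasing, non-negative, and pointwise convergent to $f$, since these are properties of the functions themselves and do not depend on $x$. Hence the hypotheses of the Monotone Convergence Corollary for the vector projection family $\mu(x)$ are satisfied.

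Applying that corollary yields
\begin{equation*}
    \int f_{n}\;d\mu(x) \xrightarrow[]{\omega^{\ast}} \int f\;d\mu(x),
\end{equation*}
which is precisely the desired conclusion. Unwinding, this asserts that for each $\Lambda\in X^{\ast}$,
\begin{equation*}
    \int f_{n}\;d\mu_{\Lambda,x} \longrightarrow \int f\;d\mu_{\Lambda,x},
\end{equation*}
which in turn is just the scalar Monotone Convergence Theorem applied to the finite measure $\mu_{\Lambda,x}$ and the non-decreasing sequence $(f_{n})$.

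I do not anticipate any real obstacle, since the argument is purely a reduction: all the work has already been carried out, first at the scalar level (classical Monotone Convergence) and then at the vector projection family level (the earlier corollary), and the bilinear/separate-continuity structure of an operator projection family ensures that $\mu(x)$ really is a vector projection family for each fixed $x$. The only thing to be slightly careful about is that the statement is pointwise in $x$ and in the $\omega^{\ast}$-topology of $X^{\ast\ast}$, rather than a uniform or norm statement on the operator $\int f\;d\mu\in B(X,X^{\ast\ast})$; but this is already the form in which the result is stated, so no strengthening is required.
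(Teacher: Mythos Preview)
Your proposal is correct and matches the paper's own proof exactly: the paper simply says ``Apply the corresponding result of vector projection families to $\mu(x)$ for each $x\in X$,'' which is precisely the reduction you carry out.
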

\begin{proof}
Apply the corresponding result of vector projection families to $\mu(x)$ for each $x\in X$.
\end{proof}

We now give the definition that motivated our study.

\begin{definition}
Let $(X,|\cdot|)$ be a Banach space and $(\Omega,\Sigma)$ a measurable space. An \textbf{operator measure} (or operator-valued measure) is a function $\mu\colon \Sigma \to B(X)$ with the following properties
\begin{enumerate}
    \item $\mu(\emptyset) = 0$ and $\mu(\Omega) = Id$
    \item $\mu(E_{1}\uplus E_{2}) = \mu(E_{1}) + \mu(E_{2})$
    \item For each $x\in X$ and $\Lambda\in X^{\ast}$ the function
    \begin{equation*}
    \begin{array}{cccc}
        \mu_{\Lambda,x}\colon & \Sigma & \longrightarrow & \mathbb{F}\\
        & E & \longmapsto & \Lambda(\mu(E)(x))
    \end{array}
    \end{equation*}
    is a measure.
\end{enumerate}
\end{definition}

The third property of the definition shows that there is a natural way of obtaining an operator projection family from an operator measure.

\begin{proposition}
Let $\mu$ be an operator measure. The family of measures
\begin{equation*}
    \mu_{\Lambda,x}(E) = \Lambda(\mu(E)(x))
\end{equation*}
is an operator projection family.
\end{proposition}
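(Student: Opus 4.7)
The plan is to verify, in turn, the two defining properties of an operator projection family. The hypothesis already hands us the hardest ingredient: clause three of the operator-measure definition asserts that each $\mu_{\Lambda,x}$ is an element of $\mathcal{M}(\Omega)$. Thus the only substantive content is to check that the map $(\Lambda,x)\longmapsto \mu_{\Lambda,x}$ is bilinear and separately setwise continuous.

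First I would establish bilinearity. Fix $E\in\Sigma$. Since for any $y\in X$ the evaluation $\Lambda\mapsto \Lambda(y)$ is linear, specializing $y=\mu(E)(x)$ shows that $\Lambda\mapsto \mu_{\Lambda,x}(E)=\Lambda(\mu(E)(x))$ is linear in $\Lambda$ for each fixed $x$. Fixing $\Lambda$ and $E$ instead, the map $x\mapsto \Lambda(\mu(E)(x))$ is the composition of the linear operator $\mu(E)\in B(X)$ with the linear functional $\Lambda$, hence linear in $x$. Since both coordinates are linear setwise, the associated map into $\mathcal{M}(\Omega)$ is bilinear.

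Next I would verify separate continuity. Suppose $\Lambda_{i}\to \Lambda$ in $X^{\ast}$; in particular $\Lambda_{i}(y)\to \Lambda(y)$ for every $y\in X$. Setting $y=\mu(E)(x)$ for arbitrary $E\in\Sigma$ gives $\mu_{\Lambda_{i},x}(E)\to \mu_{\Lambda,x}(E)$, i.e. $\mu_{\Lambda_{i},x}\xrightarrow[]{set}\mu_{\Lambda,x}$. For the other coordinate, if $x_{j}\to x$ in $X$ then continuity of $\mu(E)\in B(X)$ yields $\mu(E)(x_{j})\to \mu(E)(x)$, and continuity of $\Lambda\in X^{\ast}$ upgrades this to $\Lambda(\mu(E)(x_{j}))\to \Lambda(\mu(E)(x))$, i.e. $\mu_{\Lambda,x_{j}}(E)\to \mu_{\Lambda,x}(E)$ for every $E$, which is precisely $\mu_{\Lambda,x_{j}}\xrightarrow[]{set}\mu_{\Lambda,x}$.

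I do not anticipate any genuine obstacle: the proposition is a verification that a pointwise construction from an operator-valued measure satisfies the projection-family axioms, and the only ingredient beyond elementary algebra is the norm-continuity of the individual operators $\mu(E)$ and of the functionals $\Lambda$. The nontrivial direction—characterizing which operator projection families actually arise from operator measures in $B(X)$—is the dual of the vector-measure criterion proved earlier and is not the content here.
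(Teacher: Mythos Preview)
Your proof is correct and follows the same line as the paper's own argument, which simply notes that since each $\mu(E)\in B(X)$, the map $(\Lambda,x)\longmapsto \mu_{\Lambda,x}$ is ``immediately bilinear and continuous.'' You have unpacked exactly what that phrase means: linearity in each variable comes from the linearity of $\mu(E)$ and of evaluation $\Lambda\mapsto\Lambda(y)$, and separate setwise continuity comes from norm convergence in $X^{\ast}$ (respectively $X$) implying pointwise convergence at the fixed vector $\mu(E)(x)$ (respectively via the continuity of $\mu(E)$ and $\Lambda$).
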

\begin{proof}
Since $\mu(E)\in B(X)$ for each $x\in X$, we have that the application $(\Lambda,x) \longmapsto \mu_{\Lambda,x}$ is immediately bilinear and continuous.
\end{proof}

\begin{proposition}
Let $\mu$ be an operator projection family. $\mu$ is generated by an operator measure if and only if for each $E\in\Sigma$ the application
\begin{equation*}
    (\Lambda,x) \longmapsto \mu_{\Lambda,x}(E)
\end{equation*}
is continuous with respect to $\tau_{\omega^{\ast}}$ in the first variable and continuous with respect to $\tau_{\omega}$ in the second variable.
\end{proposition}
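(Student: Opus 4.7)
The plan is to prove each direction separately, with the forward direction being essentially a re-reading of the definition and the reverse direction being where the work lies. For the forward implication, suppose $\mu_{\Lambda,x}(E)=\Lambda(\tilde{\mu}(E)(x))$ for some operator measure $\tilde{\mu}$. Fixing $x$ and $E$, the map $\Lambda\mapsto\mu_{\Lambda,x}(E)$ is $J(\tilde{\mu}(E)(x))$ evaluated at $\Lambda$, and as an element of $J(X)$ this is $\tau_{\omega^{\ast}}$-continuous by the characterization recalled in the Notation section. Fixing $\Lambda$ and $E$, the map $x\mapsto\mu_{\Lambda,x}(E)$ is evaluation of the functional $\Lambda\circ\tilde{\mu}(E)\in X^{\ast}$, hence $\tau_{\omega}$-continuous.

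For the reverse implication, fix $E\in\Sigma$. For each $x\in X$, the map $\Lambda\mapsto\mu_{\Lambda,x}(E)$ is linear (from bilinearity of the family) and $\tau_{\omega^{\ast}}$-continuous (by hypothesis), hence lies in $J(X)$; this produces a unique element $\tilde{\mu}(E)(x)\in X$ with $\Lambda(\tilde{\mu}(E)(x))=\mu_{\Lambda,x}(E)$ for all $\Lambda\in X^{\ast}$. I would then verify that $\tilde{\mu}(E)\in B(X)$: linearity in $x$ follows from bilinearity of $(\Lambda,x)\mapsto\mu_{\Lambda,x}$ combined with the Hahn-Banach separation of points, since the identity $\Lambda(\tilde{\mu}(E)(ax+by))=\Lambda(a\tilde{\mu}(E)(x)+b\tilde{\mu}(E)(y))$ holds for every $\Lambda\in X^{\ast}$. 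Boundedness is where the $\tau_{\omega}$-continuity hypothesis on the second variable is used: it says that $\Lambda\circ\tilde{\mu}(E)$ is weakly continuous for each $\Lambda$, so $\tilde{\mu}(E)$ is weak-to-weak continuous, and a linear map between Banach spaces with this property is norm bounded (closed graph theorem).

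It remains to check the set-function properties. The equalities $\mu_{\Lambda,x}(\emptyset)=0$ and $\mu_{\Lambda,x}(E_{1}\uplus E_{2})=\mu_{\Lambda,x}(E_{1})+\mu_{\Lambda,x}(E_{2})$ hold for every $\Lambda\in X^{\ast}$ and $x\in X$ because each member of the family is a complex measure, so by the same Hahn-Banach argument $\tilde{\mu}(\emptyset)=0$ and $\tilde{\mu}(E_{1}\uplus E_{2})=\tilde{\mu}(E_{1})+\tilde{\mu}(E_{2})$ as operators. Property (3) of the operator-measure definition is automatic since each $\mu_{\Lambda,x}$ belongs to $\mathcal{M}(\Omega)$ by definition of a projection family. (The normalization $\tilde{\mu}(\Omega)=Id$ in Definition~3 is not implied by the continuity hypotheses alone; one reads the statement either modulo that normalization or under the additional standing assumption $\mu_{\Lambda,x}(\Omega)=\Lambda(x)$, in which case the Hahn-Banach argument immediately yields $\tilde{\mu}(\Omega)=\mathrm{Id}$ as well.)

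The main obstacle is the passage from weak-to-weak continuity of $\tilde{\mu}(E)$ to norm boundedness; after that, everything reduces to duality, Hahn-Banach, and the standard characterization of $J(X)$ as the $\tau_{\omega^{\ast}}$-continuous dual of $X^{\ast}$, all of which have already been used analogously in the vector projection family section, so no new machinery is required.
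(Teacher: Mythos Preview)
Your proposal is correct and follows essentially the same route as the paper: use the $\tau_{\omega^{\ast}}$-continuity in $\Lambda$ to realize $\Lambda\mapsto\mu_{\Lambda,x}(E)$ as an element of $J(X)$, then use $\tau_{\omega}$-continuity in $x$ to conclude that $\tilde{\mu}(E)$ is weak-to-weak continuous and hence bounded. Your write-up is in fact more careful than the paper's (you explicitly verify linearity, the set-function axioms, and flag the $\tilde{\mu}(\Omega)=\mathrm{Id}$ normalization issue, which the paper silently ignores), but no genuinely different idea is involved.
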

\begin{proof}
The necessity of these conditions is immediate. For sufficiency we note that the corresponding result for vector projection families applied to $\mu(x)$ provides the existence of vectors $\mu(E)(x)$ such that
\begin{equation*}
    \Lambda(\mu(E)(x)) = \mu_{\Lambda,x}(E)
\end{equation*}
for each $\Lambda\in X^{\ast}$. It follows that if $(x_{i})_{i\in I}$ is a net in $X$ such that $x_{i}\xrightarrow[]{\omega} x$ then
\begin{equation*}
    \Lambda(\mu(E)(x_{i})) \to \Lambda(\mu(E)(x))
\end{equation*}
for each $\Lambda\in X^{\ast}$, that is,
\begin{equation*}
    \mu(E)(x_{i}) \xrightarrow[]{\omega} \mu(E)(x).
\end{equation*}
This implies that the linear map $x\longmapsto \mu(E)(x)$ is continuous from $(X,\tau_{\omega})$ to $(X,\tau_{\omega})$ and therefore is norm continuous, that is, $\mu(E)\in B(X)$. From this, we conclude that the application $E \longmapsto \mu(E)$ is an operator measure.
\end{proof}

\begin{corollary}
An operator projection family $\mu$ is generated by an operator measure if and only if each element of $L^{\infty}(\mu)$ is properly integrable.
\end{corollary}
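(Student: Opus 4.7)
The plan is to reduce the equivalence to the analogous result for vector projection families together with the previous proposition that characterizes operator measures via weak-$\ast$ continuity in $\Lambda$ and weak continuity in $x$ of the maps $(\Lambda,x) \longmapsto \mu_{\Lambda,x}(E)$. The key observation is that the two variables $\Lambda$ and $x$ play asymmetric roles: proper integrability encodes information only about the $\Lambda$-variable, while the required continuity in $x$ will already be encoded in the definition of an operator projection family.

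For the forward implication, assume $\mu$ is generated by an operator measure $M$. Fix $x\in X$ and consider $\mu(x) = \{\mu_{\Lambda,x}\,|\,\Lambda\in X^{\ast}\}$. Then $E\longmapsto M(E)(x)$ is a vector measure which generates $\mu(x)$, since $\Lambda(M(E)(x)) = \mu_{\Lambda,x}(E)$. Applying the vector-case proposition to $\mu(x)$, every element of $L^{\infty}(\mu(x))$ is properly integrable with respect to $\mu(x)$. Since any null set for $\mu$ is a null set for each individual $\mu_{\Lambda,x}$, we have $L^{\infty}(\mu) \subset L^{\infty}(\mu(x))$, so $\int f\,d\mu(x) \in J(X)$ for every $f\in L^{\infty}(\mu)$ and every $x\in X$. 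This gives $\int f\,d\mu(X) \subset J(X)$ as required.

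For the converse, assume every $f\in L^{\infty}(\mu)$ is properly integrable. In particular, characteristic functions are properly integrable, so for each $E\in \Sigma$ and each $x\in X$ the functional
\begin{equation*}
    \Lambda \longmapsto \left(\int \chi_{E}\,d\mu(x)\right)(\Lambda) = \mu_{\Lambda,x}(E)
\end{equation*}
lies in $J(X)$ and is therefore $\tau_{\omega^{\ast}}$-continuous in $\Lambda$. For the weak continuity in $x$, note that $x\longmapsto \mu_{\Lambda,x}(E)$ is linear by bilinearity of $(\Lambda,x)\longmapsto \mu_{\Lambda,x}$, and the second continuity axiom of an operator projection family (that $x_{j}\to x$ in norm implies $\mu_{\Lambda,x_{j}} \xrightarrow{set} \mu_{\Lambda,x}$) yields norm-continuity of $x\longmapsto \mu_{\Lambda,x}(E)$. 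A norm-continuous linear functional on $X$ is weakly continuous, so the second hypothesis of the previous proposition is verified. That proposition then furnishes an operator measure generating $\mu$.

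The proof is essentially bookkeeping; the mild subtlety is recognizing that proper integrability alone supplies only the $\tau_{\omega^{\ast}}$-continuity in $\Lambda$, and one must appeal to the definition of an operator projection family (not to a symmetric form of proper integrability in the $x$-variable) to obtain the weak continuity in $x$. Once both conditions are in place, invoking the previous characterization proposition closes the argument.
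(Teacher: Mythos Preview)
Your proof is correct and follows the same approach as the paper for the forward direction: reduce to the vector case by noting that if $\mu$ comes from an operator measure $M$, then each $\mu(x)$ is generated by the vector measure $E\mapsto M(E)(x)$, so the vector-case proposition gives $\int f\,d\mu(x)\in J(X)$.

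For the converse, the paper's own proof is silent --- it only writes out the forward implication and leaves the other direction to the reader. Your argument fills this in correctly: proper integrability of $\chi_E$ yields $\tau_{\omega^{\ast}}$-continuity of $\Lambda\mapsto\mu_{\Lambda,x}(E)$, while the second axiom of an operator projection family gives norm-continuity (hence weak continuity, by linearity) of $x\mapsto\mu_{\Lambda,x}(E)$; the characterization proposition then applies. Your remark that the two variables play asymmetric roles --- proper integrability controls only $\Lambda$, while continuity in $x$ is already built into the definition --- is exactly the point, and makes the converse transparent in a way the paper does not.
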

\begin{proof}
If $f\in L^{\infty}(\mu)$ then $f\in L^{\infty}(\mu(x))$ for each $x\in X$. Since each $\mu(x)$ is a vector projection family, the corresponding result to this case implies that $\int f\;d\mu(x) \in J(X)$ and therefore
\begin{equation*}
    \int_{\Omega} f\;d\mu (X) \subset J(X).
\end{equation*}
\end{proof}

\begin{theorem}[Dominated Convergence Theorem for Properly Integrable Functions]
Let $\mu$ be an operator projection family. If $(f_{n})_{n\in\mathbb{N}}$ is a sequence of properly integrable functions such that $f_{n} \xrightarrow[]{pw} f$ and there exists a properly integrable function $g$ such that $|f_{n}| \leq g$ for each $n\in\mathbb{N}$ then $f$ is properly integrable and
\begin{equation*}
    \int f_{n}\;d\mu(x) \to \int f\;d\mu(x).
\end{equation*}
In consequence,
\begin{equation*}
    \int f_{n}\;d\mu \xrightarrow[]{SO} \int f\;d\mu.
\end{equation*}
\end{theorem}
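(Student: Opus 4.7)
The plan is to reduce everything to the vector projection family case that has already been established. For each fixed $x\in X$, the family $\mu(x)=\{\mu_{\Lambda,x}\;|\;\Lambda\in X^{\ast}\}$ is a vector projection family, by the earlier proposition. The hypothesis that $f_{n}$ is properly integrable with respect to the operator family $\mu$ means precisely that $\int f_{n}\;d\mu(x)\in J(X)$ for every $x\in X$, which is the same as saying $f_{n}$ is properly integrable with respect to each vector projection family $\mu(x)$. The same observation applies to the dominating function $g$. Pointwise convergence and the pointwise domination $|f_{n}|\leq g$ are unchanged when restricted to the family $\mu(x)$.

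With these reductions in place, I would fix an arbitrary $x\in X$ and apply the Dominated Convergence Theorem for Properly Integrable Functions already proved in the vector case to the family $\mu(x)$, with the sequence $(f_{n})$, limit $f$ and dominant $g$. This yields that $f$ is properly integrable with respect to $\mu(x)$, i.e. $\int f\;d\mu(x)\in J(X)$, and that
\begin{equation*}
    \int f_{n}\;d\mu(x) \longrightarrow \int f\;d\mu(x)
\end{equation*}
in norm. Since $x$ was arbitrary, the image $\int f\;d\mu(X)$ lies in $J(X)$, so $f$ is properly integrable with respect to $\mu$, and the first displayed convergence of the statement is established.

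For the final assertion, I would simply unwind the definition of the strong operator topology on $B(X)$ (applied to the operators $\int f_{n}\;d\mu$, which we now know take values in $J(X)\cong X$). The $\tau_{SO}$ topology is generated by the seminorms $p_{x}(T)=|T(x)|$, so $\tau_{SO}$-convergence of $\int f_{n}\;d\mu$ to $\int f\;d\mu$ is exactly the statement that $\bigl|\int f_{n}\;d\mu(x)-\int f\;d\mu(x)\bigr|\to 0$ for every $x\in X$, which is what was obtained in the previous paragraph.

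There is essentially no obstacle here: the argument is a clean reduction, and the only thing to be careful about is checking the definitional bookkeeping, namely that proper integrability with respect to $\mu$ transfers to proper integrability with respect to each $\mu(x)$, and that $\tau_{SO}$-convergence amounts to norm convergence of $\int f_{n}\;d\mu(x)$ for each $x$. The heavy lifting—dealing with the semivariation and the dominated passage to the limit—has already been done once and for all in the vector projection family version of the theorem.
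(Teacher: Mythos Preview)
Your proposal is correct and is exactly the approach the paper takes: the paper's entire proof is the one-line remark that the result follows from the pointwise application of the vector case. Your additional bookkeeping (that proper integrability with respect to $\mu$ is precisely proper integrability with respect to each $\mu(x)$, and that $\tau_{SO}$-convergence unwinds to norm convergence at each $x$) is accurate and fills in the details the paper leaves implicit.
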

\begin{proof}
It follows from the pointwise application of the vector case.
\end{proof}

\section{Examples}

\subsection{Measures in Banach spaces}
Let $X$ be a Banach space. The simplest kind of vector measures are constructed by taking a scalar measure $\lambda$ and a vector $x\in X$ and defining
\begin{equation*}
    \mu(E) = \lambda(E) x.
\end{equation*}
It is trivial that this is a vector measure and can be easily generalized to a finite number of scalar measures $\{\lambda_{n}\}_{n=1}^{m}$ and a finite number of vectors $\{x_{n}\}_{n=1}^{m}$, by defining
\begin{equation*}
    \mu(E) = \sum_{n=1}^{m}\lambda_{n}(E)x_{n}.
\end{equation*}
It is once again trivial that this defines a vector measure. The next generalization, and the first non-trivial one, would be the countable case. Let $(\lambda_{n})_{n\in\mathbb{N}}$ be a sequence of probability measures in a common measurable space $(\Omega,\Sigma)$ and $(x_{n})_{n\in\mathbb{N}}$ a sequence of unit-norm vectors in $X$. Define $\mu\colon \Sigma \to X$ by
\begin{equation*}
    \mu(E) = \sum_{n=1}^{\infty}2^{-n}\lambda_{n}(E)x_{n}.
\end{equation*}
The series converges since $X$ is a Banach space and it clearly converges in norm. By the Orlicz-Pettis Theorem, to show $\sigma$-additivity it is enough to show $\sigma$-additivity in the weak topology. Consider $\Lambda\in X^{\ast}$ and define
\begin{equation*}
    \mu_{\Lambda}(E) = \sum_{n=1}^{\infty}2^{-n}\lambda_{n}(E)\Lambda(x_{n}).
\end{equation*}
The Nikodym Convergence Theorem implies that the function $\mu_{\Lambda}$ is a measure. Therefore, $\mu$ is $\sigma$-additive and a vector measure. This also shows that the collection of measures
\begin{equation*}
    \{\mu_{\Lambda}\;|\;\Lambda\in X^{\ast}\}
\end{equation*}
is a vector projection family. Now consider an essentially bounded function $f$ and consider the vector
\begin{equation*}
    \sum_{n=1}^{\infty}2^{-n}\int f\;d\lambda_{n}\;x_{n},
\end{equation*}
which converges since it converges in norm. For any $\Lambda\in X^{\ast}$ we have that
\begin{equation*}
    \Lambda\left(\sum_{n=1}^{\infty}2^{-n}\int f\;d\lambda_{n}\;x_{n}\right) = \sum_{n=1}^{\infty}2^{-n}\int f\;d\lambda_{n}\;\Lambda(x_{n}).
\end{equation*}
This shows that
\begin{equation*}
    \int f\;d\mu = \sum_{n=1}^{\infty}2^{-n}\int f\;d\lambda_{n}\;x_{n}.
\end{equation*}
Therefore, this first non-trivial example can be completely solved. This same procedure can be generalized to operator-valued measures by considering a sequence $(T_{n})_{n\in\mathbb{N}}$ in $B(X)$, each with unit norm. Define
\begin{equation*}
    \mu(E) = \sum_{n=1}^{\infty}2^{-n}\lambda_{n}(E)T_{n}
\end{equation*}
and
\begin{equation*}
    \mu_{\Lambda,x}(E) = \sum_{n=1}^{\infty}2^{-n}\lambda_{n}(E) \Lambda(T_{n}(x))
\end{equation*}
for each $\Lambda\in X^{\ast}$ and $x\in X$. Each $\mu_{\Lambda,x}$ is a measure by the Nikodym Convergence Theorem and therefore $\mu$ is an operator measure. In this case, the integral of an essentially bounded function $f$ is given by
\begin{equation*}
    \int f\;d\mu(x) = \sum_{n=1}^{\infty}2^{-n}\int f\;d\lambda_{n}\; T_{n}(x).
\end{equation*}
This provides a non-trivial example of an operator-valued measure.

\subsection{Measures in Hilbert Spaces}\label{EjHilbert}
Let $H$ be a Hilbert space, $(\Omega,\Sigma)$ a measurable space, and $\mu$ an operator measure. By the Riesz Representation Theorem for Hilbert spaces, we have that each element of $H^{\ast}$ is of the form
\begin{equation*}
    x \longmapsto \langle x,y\rangle,
\end{equation*}
for a certain $y\in H$. In consequence, the previously studied measures $\mu_{\Lambda,x}$ are characterized by two elements of $H$, thus we have measures
\begin{equation*}
    \mu_{x,y}(A) = \langle \mu(A)x,y\rangle.
\end{equation*}
If $f\colon \Omega\subset\mathbb{C} \to \mathbb{C}$ is integrable with respect to $\mu$ then $\int f\;d\mu \in B(H)$ since $H$ is reflexive and therefore each integrable function is properly integrable. This operator is characterized by satisfying the equation
\begin{align*}
    \left\langle\int f\;d\mu(x),y\right\rangle &= \int f\;d\mu_{x,y}\\
    &= \int f\;d\langle \mu(\cdot)x,y\rangle
\end{align*}
for each $x,y\in H$. If $\{e_{i}\}_{i\in I}$ is a Hilbert basis, with $I$ not necessarily countable, then the integral $\int f\;d\mu$ can be reconstructed through the equation
\begin{equation*}
    \int f\;d\mu(x) = \sum_{i\in I}\left(\int f\;d\mu_{x,e_{i}}\right) e_{i}.
\end{equation*}
We conclude that if $f$ is integrable and $\{e_{i}\}_{i\in I}$ is a Hilbert basis then
\begin{equation*}
    \int f\;d\mu_{x,e_{i}} = 0
\end{equation*}
except for a countable family of indexes and, if $I_{f} = \{i_{n}\;|\;\mathbb{N}\}$ is such set, then the sequence $\left(\int f\;d\mu_{x,e_{i_{n}}}\right)_{n\in\mathbb{N}}$ defines an element of $\ell^{2}(\mathbb{C})$. In this way, the operator measure $\mu$ determines a set of rules for the projections of a vector, integrable functions being the ones that give a set of projections realizable as an element of $H$.

\subsection{Spectral Measures}
Let $H$ be a Hilbert space, $\Omega\subset\mathbb{C}$ a measurable space and $\Sigma_{\Omega}$ the Borel $\sigma$-algebra of $\Omega$. A \textbf{spectral measure} or \textbf{resolution of the identity} is an operator measure $E\colon \Sigma_{\Omega} \to B(H)$ with the following properties:
\begin{enumerate}
    \item for each $A\in\Sigma_{\Omega}$ the operator $E(A)$ is a self-adjoint projection.
    \item $E(A\cap B) = E(A)\circ E(B)$ for each $A,B\in\Sigma_{\Omega}$. In particular, $E(A)$ and $E(B)$ commute.
\end{enumerate}
Let us denote by $\mathcal{M}_{\sigma}(H)$ the set of spectral measures in $H$. All properties of the operator measures discussed in the example \ref{EjHilbert} are also valid for spectral measures. A notable consequence of the second property of spectral measures is that the obtained integral is multiplicative, in the sense that if $f$ and $g$ are integrable then
\begin{equation*}
    \left(\int f\;dE\right) \circ \left(\int g\;dE\right) = \int fg\;dE.
\end{equation*}

The name of these operator measures originates from the spectral theorem.

\begin{theorem}[Spectral Theorem]
Let $H$ be a Hilbert space. For each normal $T\in B(H)$ there exists a unique spectral measure $E^{T}$ defined on the Borel subsets of the spectrum of $T$, $\sigma(T)$, such that
\begin{align*}
    T &= \int_{\sigma(T)} Id\;dE^{T}\\
    &= \int_{\sigma(T)} \lambda\;dE^{T}(\lambda).
\end{align*}
\end{theorem}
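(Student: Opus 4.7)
The plan is to realize $E^{T}$ as the spectral measure associated to the operator projection family canonically attached to $T$ through the continuous functional calculus, and then to read off the desired resolution.

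First I would invoke the Gelfand theorem for the commutative $C^{\ast}$-subalgebra of $B(H)$ generated by $T$, $T^{\ast}$ and $I$, which is commutative because $T$ is normal, to obtain an isometric $\ast$-isomorphism $\Phi\colon C(\sigma(T))\to C^{\ast}(T)$ with $\Phi(\lambda)=T$. For each pair $x,y\in H$ the linear functional $f\mapsto\langle\Phi(f)x,y\rangle$ is bounded on $C(\sigma(T))$ with norm at most $\|x\|\|y\|$, so by the Riesz--Markov--Kakutani theorem there is a unique regular complex Borel measure $\mu_{x,y}$ on $\sigma(T)$ with
\begin{equation*}
\langle\Phi(f)x,y\rangle = \int_{\sigma(T)} f\,d\mu_{x,y},\qquad f\in C(\sigma(T)).
\end{equation*}
Identifying $H^{\ast}$ with $H$ through $\Lambda_{y}(x)=\langle x,y\rangle$, I would set $\mu_{\Lambda_{y},x}:=\mu_{x,y}$. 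Sesquilinearity of $(x,y)\mapsto\langle\Phi(f)x,y\rangle$ together with uniqueness in Riesz gives the bilinearity of $(\Lambda,x)\mapsto\mu_{\Lambda,x}$. Separate setwise continuity falls out of the norm bound $|\mu_{x,y}(A)|\leq\|x\|\|y\|$ via $|\mu_{x,y_{i}}(A)-\mu_{x,y}(A)|=|\mu_{x,y_{i}-y}(A)|\leq\|x\|\|y_{i}-y\|$ and the analogous estimate in the first slot. Thus $\mu:=\{\mu_{\Lambda,x}\}$ is an operator projection family on the Borel $\sigma$-algebra of $\sigma(T)$.

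Because $H$ is reflexive, the corollary stating that every element of $L^{1}(\mu)$ is properly integrable when $X$ is reflexive applies, so in particular $E^{T}(A):=\int_{\sigma(T)}\chi_{A}\,d\mu$ is a well-defined element of $B(H,H^{\ast\ast})$, which I identify with $B(H)$ via $J$. I would then verify the spectral measure axioms: $E^{T}(\emptyset)=0$ and $E^{T}(\sigma(T))=I$ come from $\Phi(1)=I$; countable additivity in the strong operator topology follows from the Monotone Convergence Theorem for operator projection families applied to $\chi_{\biguplus_{n}A_{n}}=\sum_{n}\chi_{A_{n}}$; and self-adjointness of $E^{T}(A)$ is inherited from the $\ast$-preserving character of $\Phi$, which yields $\mu_{x,y}(A)=\overline{\mu_{y,x}(A)}$. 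The hardest step is the multiplicativity $E^{T}(A\cap B)=E^{T}(A)\circ E^{T}(B)$: it holds for integrals of continuous functions because $\Phi$ is multiplicative, and I would extend it to characteristic functions in two stages by the Dominated Convergence Theorem for properly integrable functions, approximating first $\chi_{B}$ and then $\chi_{A}$ by norm-bounded sequences of continuous functions convergent almost everywhere with respect to the relevant $\mu_{x,y}$.

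The spectral resolution is then immediate: taking $f(\lambda)=\lambda$ in the defining identity for $\mu_{x,y}$ yields
\begin{equation*}
\langle Tx,y\rangle = \int_{\sigma(T)}\lambda\,d\mu_{x,y}(\lambda) = \Big\langle\Big(\int_{\sigma(T)}\lambda\,dE^{T}(\lambda)\Big)x,\,y\Big\rangle
\end{equation*}
for all $x,y\in H$. For uniqueness, any other spectral measure $F$ with $T=\int\lambda\,dF$ must agree with $E^{T}$ against every polynomial in $\lambda$ and $\bar\lambda$ by multiplicativity and self-adjointness, hence against every continuous function by Stone--Weierstrass, and the extended Borel functional calculus then forces $F(A)=E^{T}(A)$ for every Borel $A$. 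The principal obstacle throughout is the promotion of multiplicativity from continuous to bounded Borel functions, which is exactly where the paper's Dominated Convergence Theorem for properly integrable functions earns its keep, as the weak convergences involved in a naive approach are not jointly compatible with operator composition without the uniform bound supplied by domination.
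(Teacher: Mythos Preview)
The paper does not prove this statement. The Spectral Theorem for normal operators on a Hilbert space appears in the Examples section purely as a quoted classical result, with no argument supplied; it is there to illustrate that spectral measures fit into the paper's framework. The Banach-space generalisation mentioned at the very end is explicitly deferred to a sequel. So there is no ``paper's own proof'' to compare against.

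Your outline is the standard Gelfand/Riesz--Markov proof, dressed in the paper's language of operator projection families; the overall strategy is sound, and invoking reflexivity of $H$ via the paper's corollary to land $E^{T}(A)$ in $B(H)$ rather than $B(H,H^{\ast\ast})$ is appropriate. One genuine wrinkle: in the multiplicativity step you propose to approximate $\chi_{A}$ and $\chi_{B}$ by bounded continuous functions converging \emph{almost everywhere with respect to the relevant $\mu_{x,y}$}, and then apply the paper's Dominated Convergence Theorem for properly integrable functions. That theorem, however, is stated for \emph{pointwise} (everywhere) convergence, and an a.e.\ approximation tailored to one pair $(x,y)$ does not give an operator identity valid for all pairs simultaneously. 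The clean repair is the usual monotone-class device: fix $f\in C(\sigma(T))$ and show that the set of bounded Borel $g$ with $\Psi(fg)=\Psi(f)\Psi(g)$ contains $C(\sigma(T))$ and is closed under uniformly bounded pointwise sequential limits, hence equals all bounded Borel functions; then repeat with the roles of $f$ and $g$ interchanged. That version matches the hypotheses of the paper's DCT exactly and closes the gap.
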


With the terminology of our general construction, we can restate the spectral theorem as follows.

\begin{theorem}[Spectral Theorem]
Let $H$ be a Hilbert space. The application
\begin{equation*}
\begin{array}{ccc}
    \mathcal{M}_{\sigma}(H) & \longrightarrow &B(H) \\
    E & \longmapsto & \int_{\sigma(T)}Id\;dE
\end{array}
\end{equation*}
is an isomorphism between spectral measures and the normal elements of $B(H)$.
\end{theorem}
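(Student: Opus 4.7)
The plan is to interpret ``isomorphism'' here as a bijection (together with the multiplicative compatibility $\int f\,dE \circ \int g\,dE = \int fg\,dE$ already observed), and to deduce the statement by combining the classical spectral theorem just cited with the integration machinery developed for operator projection families. So the strategy splits naturally into four steps: well-definedness, landing in the normal operators, injectivity, and surjectivity.

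First I would check that the assignment is well-defined. Given $E\in \mathcal{M}_{\sigma}(H)$ with domain $\Omega\subset\mathbb{C}$, the identity function $\mathrm{Id}$ is bounded on $\Omega$ (for a spectral measure of an operator, $\Omega=\sigma(T)$ is compact; more generally one may restrict to the essential range). Since $H$ is reflexive, the corollary in Section 3 ensures every essentially bounded function is properly integrable, so $\int_\Omega \mathrm{Id}\,dE\in B(H)$.

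Next I would show the image consists of normal operators. Writing $T=\int \lambda\,dE$, the self-adjointness of each $E(A)$ together with the polarization identity and the definition of the integral via $\langle \int f\,dE(x),y\rangle = \int f\,d\mu_{x,y}$ give $(\int f\,dE)^{*} = \int \bar f\,dE$; combined with the multiplicativity $\int f\,dE\circ \int g\,dE = \int fg\,dE$ recalled in the excerpt, one obtains
\begin{equation*}
T^{*}T = \int \bar\lambda\,dE \circ \int \lambda\,dE = \int |\lambda|^{2}\,dE = \int \lambda\,dE\circ \int \bar\lambda\,dE = TT^{*},
\end{equation*}
so $T$ is normal.

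For injectivity and surjectivity I would invoke the classical spectral theorem stated immediately before. Surjectivity is its existence half: given a normal $T\in B(H)$, the spectral measure $E^{T}$ sends $T$ to $\int_{\sigma(T)}\lambda\,dE^{T}=T$. For injectivity, suppose $E_{1},E_{2}$ are spectral measures with $\int \lambda\,dE_{1}=\int \lambda\,dE_{2}=:T$. Using the $\ast$-homomorphism property above, the associated functional calculi $f\mapsto \int f\,dE_{i}$ agree on polynomials in $\lambda,\bar\lambda$; by Stone-Weierstrass and the Dominated Convergence Theorem for properly integrable functions proved in this paper, they agree on all bounded Borel functions, hence on indicator functions, giving $E_{1}(A)=E_{2}(A)$ for every Borel $A$. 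Alternatively, one may simply appeal to the uniqueness half of the classical spectral theorem. The main obstacle I anticipate is the Stone-Weierstrass-plus-dominated-convergence extension step, since one must check that the sequences of polynomial approximants remain uniformly bounded on $\sigma(T)$ so that the dominated convergence hypothesis applies; once this is in hand, the rest is a clean dictionary translation of the classical statement into the projection-family language.
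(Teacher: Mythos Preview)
The paper does not supply a proof of this statement. It appears in the Examples section immediately after the classical Spectral Theorem is quoted (itself without proof, as a well-known result) and is introduced only with the sentence ``With the terminology of our general construction, we can restate the spectral theorem as follows.'' So there is nothing to compare your argument against: the paper treats the statement purely as a translation of the classical theorem into projection-family language, not as something requiring an independent proof.

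Your sketch therefore goes further than the paper, and it is essentially the standard argument. Well-definedness and normality of the image follow from reflexivity of $H$, the multiplicativity of the spectral integral, and the adjoint formula $(\int f\,dE)^{\ast}=\int \bar f\,dE$; surjectivity and injectivity are exactly the existence and uniqueness halves of the classical spectral theorem you cite. The alternative you mention for injectivity---simply invoking the uniqueness clause of the classical theorem---is shorter and is really all that a ``restatement'' requires. If you do want to run the Stone--Weierstrass plus dominated-convergence route, note one technical wrinkle: Stone--Weierstrass takes you only from polynomials in $\lambda,\bar\lambda$ to $C(\sigma(T))$; passing from continuous functions to indicator functions of Borel sets needs a monotone-class or bounded-pointwise-closure argument rather than another application of Stone--Weierstrass. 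Your concern about uniform boundedness of the polynomial approximants is not an obstacle, since uniform convergence on the compact set $\sigma(T)$ automatically gives a uniform bound and hence an integrable dominating constant.
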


One of the most important consequences of the Spectral Theorem is that it provides the definition of the Borel Functional Calculus; given an integrable function $f\colon \sigma(T) \to \mathbb{C}$ we define $f(T)\in B(H)$ as
\begin{equation*}
    f(T) = \int_{\sigma(T)}f\;dE^{T}.
\end{equation*}
The map $f\longmapsto f(T)$ defines a functional calculus that generalizes the Holomorphic Functional Calculus, in the sense that it extends the inverse of the Gelfand transform.

\subsection{Operation and Positive Operator Measures}
Let $H$ be a Hilbert space. We say that $\rho\in B(H)$ is a \textbf{state} if $\rho$ is trace class and non-negative, that is, for each $x\in H$ we have that $\langle \rho(x),x \rangle \geq 0$. Since non-negative operators are self-adjoint we also have that every state is self-adjoint. We denote the set of states of $H$ by $S(H)$. If $L^{1}(H)$ is the set of trace class operators in $H$ then $S(H)$ is a closed subspace of $(L^{1}(H),|\cdot|_{tr})$, thus $(S(H),|\cdot|_{tr})$ is a Banach space. If $B_{SA}(H)$ is the set of self-adjoint elements of $B(H)$ and $K_{SA}(H)$ is the set of compact self-adjoint operators then we have the following dualities:
\begin{equation*}
    K_{SA}(H) \xrightarrow[]{\ast} S(H) \xrightarrow[]{\ast} B_{SA}(H),
\end{equation*}
by which we mean that $K_{SA}(H)^{\ast}$ is identified with $S(H)$ and $S(H)^{\ast}$ is identified with $B_{SA}(H)$. Specifically, for each $\Lambda\in K_{SA}(H)^{\ast}$ there exists a unique $\rho\in S(H)$ such that
\begin{align*}
    \Lambda(K) &= tr(K\circ\rho)\\
    &= tr(\rho\circ K)
\end{align*}
and for each $\Phi\in S(H)^{\ast}$ there exists an unique $T\in B_{SA}(H)$ such that
\begin{align*}
    \Phi(\rho) &= tr(T\circ\rho)\\
    &= tr(\rho\circ T).    
\end{align*}
The reason we consider these spaces is that we won't consider operator measures in $H$ but rather on the Banach space $S(H)$.

\begin{definition}
Let $H$ be a Hilbert space and $(\Omega,\Sigma)$ a measurable space. An \textbf{operation measure} is an operator measure $\mathcal{E}\colon \Sigma \to B(S(H))$.
\end{definition}

The name of these measures originates from the fact that the elements of $B(S(H))$ are called operations. If $\mathcal{E}$ is an operation measure then the associated measures are of the form
\begin{align*}
    \mathcal{E}_{T,\rho}(A) &= tr(T\circ \mathcal{E}(A)(\rho))\\
    &= tr((\mathcal{E}(A)(\rho))\circ T),
\end{align*}
where $\rho\in S(H)$ and $T\in B(H)$. It follows that if $f$ is a function integrable with respect to $\mathcal{E}$ then the operation $\int f\;d\mathcal{E}$ is characterized by the equation
\begin{align*}
    tr\left(T\circ \int f\;d\mathcal{E}\right) &= \int f\;d\mathcal{E}_{T,\rho}\\
    &= \int f\;d(tr(T\circ \mathcal{E}(\cdot)(\rho))).
\end{align*}
Since $S(H)$ is not in general reflexive it is important to distinguish between integrable and properly integrable functions. If $x\in H$ then the operator $\rho_{x}$ defined as
\begin{equation*}
    \rho_{x}(y) = \langle y,x \rangle x
\end{equation*}
satisfies that $\rho_{x}\in S(H)$, thus the map
\begin{equation}\label{InyecciónHilbertEstados}
\begin{array}{cccc}
    \mathcal{D}\colon & H & \longrightarrow &S(H) \\
     &x  &\longmapsto  &\rho_{x}
\end{array}
\end{equation}
is an injection of $H$ into $S(H)$. The function $\mathcal{D}$ is not linear but it satisfies $\rho_{x+y} = \rho_{x} + \rho_{y}$ if $\langle x,y \rangle = 0$. In this way, it makes sense to ask whether the operation measure $\mathcal{E}$ can be thought of as an operator measure in $H$.

\begin{definition}
Let $H$ be a Hilbert space. A \textbf{positive operator measure} is an operator measure $\mathcal{P}\colon \Sigma \to B(H)$ such that $\mathcal{P}(A)$ is a non-negative operator for each $A\in\Sigma$.
\end{definition}

Positive operator measures are similar to the general operator measures in Hilbert spaces described in the example \ref{EjHilbert}, with the notable difference that if $f$ is a non-negative function then $\int f\;d\mathcal{P}$ is a non-negative operator. This follows from the fact that $\mathcal{P}_{x,x}$ is a non-negative measure, since
\begin{equation*}
    \mathcal{P}_{x,x}(A) = \langle \mathcal{P}(A)(x),x \rangle \geq 0
\end{equation*}
as $\mathcal{P}(A)$ is a non-negative operator. This implies that
\begin{align*}
    \left\langle \int f\;d\mathcal{P}(x),x \right\rangle = \int f\;d\mathcal{P}_{x,x} \geq 0.
\end{align*}
Both kinds of measures are related to Quantum Mechanics and how the state of a quantum system changes when a measurement takes place. A natural problem that arises from this is whether certain kinds of measurements in pure states make sense in mixed states.

\begin{definition}
Let $H$ be a Hilbert space. We say that a positive operator measure $\mathcal{P}$ has an \textbf{extension to mixed states} with respect to a Hilbert $\{e_{i}\}_{i\in I}$ basis if there exists an operation measure $\mathcal{E}$ such that
\begin{equation*}
    \mathcal{E}(A)(\rho_{e_{i}}) = \mathcal{P}(A)(e_{i})
\end{equation*}
for each $i\in I$ and $A\in\Sigma$.
\end{definition}

The reason to ask the previous equation to be valid only for elements of the Hilbert basis is that the function $\mathcal{D}$ defined in (\ref{InyecciónHilbertEstados}) is not linear unless the summands are orthogonal. The problem of whether a positive operator measure has an extension to mixed states is in general very difficult. A way to see this is to note that since $H$ is reflexive each integrable function is properly integrable, while this is not necessarily true for operation measures.

\section{Conclusions}

In this work, we introduced the concept of projection families, which generalize the usual notions of vector and operator-valued measures and still satisfy the theorems of monotone and dominated convergence. Instead of the standard notion of a measure that associates an operator to a measurable set, we use a family of measures that determines the projections of an operator. 

The new notion of projection families and their properties allows us to generalize Lewis integration theory in such a way that previous integration theories with respect to operator-valued measures turn out to be contained as particular cases in this new generalized theory.

As an additional result, it can be shown that projection families allow us to generalize the Spectral Theorem to include Banach algebras and operators between Banach spaces. 
Indeed, the classical Spectral Theorem is a result on operators acting on Hilbert spaces that may fail in Banach spaces for various reasons. This theorem states that a normal operator acting on a Hilbert space can be written as an integral with respect to an operator-valued measure. Our generalization states that the previous statement can be made valid in Banach spaces if the notion of 
operator-valued measure is replaced by  that of operator projection families. The main difficulty is defining a continuous functional calculus that does not rely on the Gelfand Theory of commutative algebras.
This result is not straightforward and requires detailed explanations, which will be presented in a follow-up article. 

\section*{Acknowledgements}
This work was supported by UNAM-DGAPA-PAPIIT, grant No. IN108225, and CONAHCYT, grant No. CBF-2025-I-243.\\

%\noindent \textbf{Data Availability} 
%Data sharing is not applicable to this article as no new data were created or analyzed in this study.\\

%\noindent
%\textbf{Declarations}\\

%\noindent
%\textbf{Conflict of interest} The authors declare that they have no conflict of interest.

\nocite{*}
\bibliographystyle{alpha}
\bibliography{main}
\addcontentsline{toc}{section}{Bibliography}

\end{document}